\numberwithin{equation}{section}
\numberwithin{figure}{section}
\theoremstyle{plain}
\newtheorem{thm}{Theorem}
  \theoremstyle{plain}
  \newtheorem{lem}[thm]{Lemma}
  \newtheorem{prop}[thm]{Proposition}
\begin{document}

\title{Exponential growth rates of free and amalgamated products}

\author{Michelle Bucher, Alexey Talambutsa}

\address{Universit\'e de Gen\`eve, Section de Math\'ematiques, 2-4 rue du Li\`evre, \linebreak Case postale~64, 1211 Gen\`eve 4, Suisse}
\email{michelle.bucher-karlsson@unige.ch}

\address{Steklov Mathematical Institute of RAS, Department of mathematical logic, \linebreak Gubkina~8, 119991 Moscow, Russia}
\email{altal@mi.ras.ru}

\begin{abstract}
We prove that there is a gap between $\sqrt{2}$ and $(1+\sqrt{5})/2$ for the exponential growth rate of free products $G=A*B$ not isomorphic to the infinite dihedral group. For amalgamated products $G=A*_C B$ with $([A:C]-1)([B:C]-1)\geq2$, we show that lower exponential growth rate than $\sqrt{2}$ can be achieved by proving that the exponential growth rate of the amalgamated product $\mathrm{PGL}(2,\mathbb{Z})\cong (C_2\times C_2) *_{C_2} D_6$ is equal to the unique positive root of the polynomial $z^3-z-1$. This answers two questions by Avinoam Mann [The growth of free products, Journal of Algebra 326, no. 1 (2011) 208--217]. \end{abstract}

\thanks{Michelle Bucher was supported by Swiss National Science Foundation
project PP00P2-128309/1. Alexey Talambutsa was supported by Russian Foundation for Basic Research, project 11-01-12111.
The authors thank the Institute Mittag-Leffler in Djursholm, Sweden,
for their warm hospitality during  the preparation of this paper.}
\maketitle

\section{Introduction}

Let $G$ be a group generated by a finite set $S$. For any element $g\in G$ the length $\ell_{G,S}(g)$ is defined as
the minimal possible integer $n$ such that 
$g=x_1x_2\dots x_n$, where $x_i\in S\cup S^{-1}$ for all $i\in \{1,2,\ldots,n\}$. The corresponding growth function $F_{G,S}(n)$ counts the number of elements $g\in G$ for which $\ell_{G,S}(g)\leqslant n$. The \emph{exponential growth rate} of $G$ with respect to $S$ is the limit
$$\omega(G,S)=\lim\limits_{n\to \infty}(F_{G,S}(n))^{\frac1n},$$ which always exists by subadditivity of the growth function. The value of $\omega(G,S)$ may depend on the choice of generating set $S$. It is obviously always $\omega(G,S)\geq 1$ and there is hence an infimum $\Omega(G)=\inf \{\omega(G,S)\}$ which is an invariant of the group $G$.
If however the strict inequality $\omega(G,S)>1$ is true for some generating set $S$ then $\omega(G,S')>1$ for any generating set $S'$, and the group $G$ has \emph{exponential growth}. If further $\Omega(G)>1$ then $G$ has  \emph{uniform exponential growth}.

Following Wilson's original example of a group of exponential growth with $\Omega(G)=1$, there are now series of such examples (see \cite{Wilson,Bartholdi,Nekrashevych}). Still, these constructions are somewhat exceptional, and for many classes of groups of exponential growth such as linear groups \cite{EMO}, hyperbolic groups \cite{Koubi}, solvable groups \cite{Osin}, amalgamated products and HNN extensions \cite{dlHBucher}, one relator groups \cite{GrigorchukDlH}, etc. it is proved that $\Omega(G)>1$. We will concentrate on free products and amalgamated products for which uniform exponential growth was established in \cite{dlHBucher} and address here the question of the sharpness of lower bounds for the minimal exponential growth rate for these groups.

For free products $G=A*B$ not isomorphic to the infinite dihedral group, it is not difficult to show that $\Omega(A*B)\geq \sqrt{2}$ \cite[Theorem 4]{Mann}, which is a sharp inequality since $\Omega(C_2*C_3)=\sqrt{2}$, where $C_k$ denotes the cyclic group of order $k$. The next example, computed by Mann, shows  $\Omega(C_2*C_4)=\frac{1+\sqrt5}2$ \cite[Theorem 6]{Mann}. We negatively answer the question of Mann \cite[Problem 2]{Mann} whether there exists  a free product $A*B$ for which $\sqrt2<\Omega(A*B)<\frac{1+\sqrt5}2$.

\begin{thm}
\label{thm: free product}Let $G=A*B$ be the free product of the
groups $A$ and $B$. If $G$ is not isomorphic to $C_2*C_2$ or $C_2*C_3$, then $\Omega(G)\geq\frac{1+\sqrt{5}}{2}$.
\end{thm}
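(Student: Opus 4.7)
Let $S$ be a finite symmetric generating set of $G=A*B$, and write $S_A=S\cap A$, $S_B=S\cap B$. My plan is to split the argument according to whether every element of $S$ has syllable length one, i.e.\ whether $S\subset A\cup B$.

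\emph{Case 1: $S\subset A\cup B$.} Then $S=S_A\sqcup S_B$ with $S_A$ generating $A$ and $S_B$ generating $B$. Because the free-product normal form admits no cancellation across syllables, the $S$-length of any $g\in G$ equals the sum of the $S_A$- and $S_B$-lengths of its syllables, and the spherical growth series satisfies the classical identity
\[
\frac{1}{\sigma_G(z)} \;=\; \frac{1}{\sigma_A(z)}+\frac{1}{\sigma_B(z)}-1,
\]
so $\omega(G,S)$ is the reciprocal of the smallest positive root $z_0$ of $(\sigma_A(z)-1)(\sigma_B(z)-1)=1$. I would then classify the low-degree behaviour of $\sigma_A-1$: it equals $z$ iff $A\cong C_2$ with its unique symmetric generating set, equals $2z$ iff $A\cong C_3$ with its unique symmetric generating set $\{t,t^{-1}\}$, and otherwise $\sigma_A(z)-1\geq 2z+z^2$ on $[0,1]$. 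The third alternative covers the remaining cases: either $|S_A|\geq 3$, giving $3z\geq 2z+z^2$ on $[0,1]$, or $|S_A|=2$ with $|A|\geq 4$, forcing an element of $S_A$-length $\geq 2$. Under the hypothesis $G\notin\{C_2*C_2,C_2*C_3\}$, at least one factor satisfies the third alternative, whence
\[
(\sigma_A(z)-1)(\sigma_B(z)-1) \;\geq\; z(2z+z^2) \;=\; 2z^2+z^3.
\]
A direct check via the golden ratio identity $\phi^{-1}+\phi^{-2}=1$ shows $2z^2+z^3=1$ at $z=1/\phi$, so $z_0\leq 1/\phi$ and $\omega(G,S)\geq\phi$.

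\emph{Case 2: some $s\in S$ has syllable length $\geq 2$.} The plan is to relate $S$ to the syllable-set $S^\dagger\subset A\cup B$ consisting of all syllables occurring in the normal forms of $S$-generators; this set still generates $G$, so Case 1 yields $\omega(G,S^\dagger)\geq\phi$. The technical difficulty is that $S^\dagger$ need not contain $S$, so the inequality $\omega(G,S)\geq\omega(G,S^\dagger)$ does not follow from set inclusion and must be established by a direct combinatorial argument. My plan is to exhibit a Fibonacci-type family of at least $c\phi^n$ distinct $G$-elements at $S$-distance $\leq n$ by alternating $s$ (or a suitable power of $s$) with one other generator, exploiting the long syllable structure of $s$ to force non-cancellation in the Bass--Serre tree of $A*B$.

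The main obstacle is Case 2: naively enlarging $S$ by its syllables only decreases $\omega$, so the comparison between $\omega(G,S)$ and $\omega(G,S^\dagger)$ cannot be effected by set inclusion and requires a careful length-preserving counting. Case 1, by contrast, reduces to a series manipulation, whose only nontrivial ingredient is the uniform lower bound $\sigma_A(z)-1\geq 2z+z^2$ on $[0,1]$ valid for all finite groups $A\notin\{C_2,C_3\}$ and all of their symmetric generating sets.
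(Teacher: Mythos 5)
Your Case 1 is fine in substance (and is a nice alternative to the paper's treatment of the corresponding situation, via the spherical growth series identity for free products rather than an explicit word count), modulo small repairs: the claim that ``at least one factor satisfies the third alternative'' fails for $C_3*C_3$, which is not excluded by the hypothesis (there the product is $4z^2$ and the bound still holds, so this is only a slip), and for infinite factors one should argue with radii of convergence rather than pointwise inequalities ``on $[0,1]$.'' But Case 1 is the easy part of the theorem, and it is not even the generic situation: a generating set of $A*B$ need not meet $A\cup B$ at all.

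The genuine gap is Case 2, which is where essentially all of the content of the theorem lies, and for which you offer only a plan, not an argument. If some $s\in S$ has syllable length $\geq 2$, it may be hyperbolic or elliptic (e.g.\ $aba^{-1}$), and the structure of $\langle s,t\rangle$ for another generator $t$ can be complicated: this is exactly where the paper deploys its Bass--Serre tree machinery (translation lengths, axes, ping-pong, the analysis of an elliptic $x$ together with a hyperbolic $y$ giving $\langle x\rangle * \langle yx^{\ell}\rangle$, and the treatment of sets of elliptic generators with two or more distinct fixed points). Moreover, the bound $\frac{1+\sqrt5}{2}$ is \emph{attained} in several Case 2 configurations --- for instance an elliptic $x$ and a hyperbolic $y$ can generate $C_2*C_3$ with respect to the non-canonical generating set $\{a,ba\}$, whose growth rate is exactly the golden ratio (Machì's computation), and similarly for subgroups isomorphic to the infinite dihedral group one must pass to a third generator. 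So no rough ``Fibonacci-type family obtained by alternating $s$ with one other generator'' can succeed: alternating two letters only gives $\sqrt2$-type counts unless one already knows precise non-cancellation and counts longer pieces (as in the paper's Lemmas on positive free monoids), and any correct count must be sharp enough to hit $\phi$ exactly in the borderline cases while ruling out the genuinely smaller-growth configurations (which only occur for $C_2*C_2$ and $C_2*C_3$, excluded by hypothesis). Your proposed reduction to the syllable set $S^{\dagger}$ does not repair this: as you note yourself, there is no inequality $\omega(G,S)\geq\omega(G,S^{\dagger})$, and no substitute argument is given. As it stands, the proposal proves the theorem only for generating sets contained in $A\cup B$.
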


Based on inequalities of Lyons, Pichot and Vassout between exponential growth rate and $\ell^2$-Betti numbers, Theorem \ref{thm: free product} was known for all but a finite number of groups. Indeed, it is shown in \cite{Lyons} that
$$\Omega(A*B)\geq 1+2\beta^{(2)}_1(A * B) = 3+2\beta^{(2)}_1(A) + 2\beta^{(2)}_1(B)  - 2/|A| - 2/|B|,$$
which is greater or equal to $5/3$ unless the order of $A$ (or $B$) is $2$ and the order of $B$ (or $A$) is $2,3,4$ or $5$. The minimal growth rate of $C_2*C_5$ is computed in \cite{Talambutsa2011} and is strictly greater than the golden ratio (and further also greater than $5/3$). Thus, we note that the only new case covered by Theorem  \ref{thm: free product} is $G=C_2 * (C_2 \times C_2)$, which, as a group generated by elements of order $2$, precisely falls in the category of groups for which the methods in \cite{Mann} fail. We present a unified and direct geometric proof of the general Theorem \ref{thm: free product}.



For amalgamated products $G=A*_C B$ with $([A:C]-1)([B:C]-1)\geq2$, the rough inequality $\Omega(A*_C B)\geq \sqrt[4]2$ was proved by the first author and de la Harpe \cite{dlHBucher} in order to establish that these groups are of uniform exponential growth. It is asked in \cite[Problem 3.3]{dlHGrowth} if the lower bound $\sqrt[4]2$ can be improved to $\sqrt{2}$. This is the case if the subgroup $C$ is normal in $A$ and $B$, in which case $\Omega(A*_C B)=\Omega(A/C * B/C)$ and in particular $$\Omega(\mathrm{PSL}(2,\mathbb{Z}))= \Omega(\mathrm{SL}(2,\mathbb{Z}))= \sqrt{2}.$$
Mann raised the lower bound of $ \sqrt[4]2$ to the so called \emph{plastic number} $\alpha$, which is the unique positive root of the polynomial $z^3-z-1$ and is equal to the cubic irrationality
$$
\alpha = \sqrt[3]{\frac12+\frac16\sqrt{\frac{23}3}}+\sqrt[3]{\frac12-\frac16\sqrt{\frac{23}3}}.
$$
This number appears in number theory and other notable occasions (see \cite{wiki-Plastic}).

\begin{thm}[Mann, Theorem 2 in \cite{Mann}]
\label{thm: amalgamated product}Let $G=A*_{C}B$ be the amalgamated
product of the groups $A$ and $B$ over $C$. If $([A:C]-1)([B:C]-1)\geq2$
then $\Omega(G)\geq  \alpha.$
\end{thm}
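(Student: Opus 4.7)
The plan is to bound $\omega(G,S)$ from below for an arbitrary finite generating set $S$ of $G=A*_C B$ by exhibiting sufficiently many distinct elements representable by short words in $S$. The basic tool is the normal form theorem: every $g\in G$ has a unique expression $g = c\cdot t_1 t_2 \cdots t_k$ with $c\in C$ and $t_i$ lying alternately in fixed transversals $T_A\setminus\{e\}$ and $T_B\setminus\{e\}$ of $C$ in $A$ and $B$; the integer $k=\lambda(g)$ is the \emph{syllable length}. With $m=[A:C]$ and $n=[B:C]$, the hypothesis $(m-1)(n-1)\geq 2$ ensures that the number of $g\in G$ with $\lambda(g)\leq k$ is at least $((m-1)(n-1))^{k/2}$ up to a constant, in particular at least $2^{k/2}$.

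First I would set $\Lambda=\max_{s\in S}\lambda(s)$ and dispose of the easy case $\Lambda=1$: all generators already lie in $A\cup B$, the inequality $\lambda(g)\leq|g|_S$ holds for every $g\in G$, and one directly obtains $\omega(G,S)\geq\sqrt{(m-1)(n-1)}\geq\sqrt{2}>\alpha$. Hence we may assume that $S$ contains a generator of syllable length at least $2$, and the core of the argument becomes tracking how products in $S$ translate into admissible syllable sequences. To each $s\in S$ I would associate its syllable pattern; a product $s_{i_1}\cdots s_{i_r}$ then corresponds to a concatenation of these patterns, subject to a controlled amount of amalgamated fusion at the junctions between consecutive generators.

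The central combinatorial step is to exhibit, inside the Cayley graph, a sub-language of reduced words whose counting function satisfies a linear recurrence with characteristic polynomial $z^3-z-1$. The natural source of such a recurrence is a two-move transition system on the trailing syllable of the accumulated product: at each stage one may either append a \emph{short extension} contributing word-length~$2$ or, when the short one is obstructed by cancellation, a \emph{long extension} contributing word-length~$3$. This yields $a_\ell = a_{\ell-2}+a_{\ell-3}$ and hence a growth rate equal to the plastic number $\alpha$. The hypothesis $(m-1)(n-1)\geq 2$ enters precisely here: it guarantees that at every stage at least one continuation of each type is available, so that the recurrence never degenerates into a smaller one.

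The main obstacle will be establishing this recurrence uniformly over every finite generating set $S$, in particular when $S$ contains many generators of large syllable length or when several generators interact non-trivially through the amalgamated subgroup $C$. The cleanest resolution is a normalization lemma showing that, after a length-non-increasing modification of $S$ via Nielsen-type transformations, one can always isolate two generators playing the roles of the short and long extensions described above; alternatively, one can argue via generating functions on the monoid of reduced normal forms and transfer the bound back to $S$ using the crude inequality $\lambda(g)\leq\Lambda\cdot|g|_S$ while compensating the slack by an extra count of junction fusions. In either route, the combinatorial bookkeeping required to preclude aggressive simultaneous shortenings of syllable patterns inside an $S$-word is expected to be the main technical hurdle.
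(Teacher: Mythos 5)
Your proposal has two genuine gaps. First, the ``easy case'' $\Lambda=1$ is handled with an inequality pointing the wrong way: $\lambda(g)\le|g|_S$ says the $S$-ball of radius $n$ is \emph{contained} in the syllable-length ball of radius $n$, so counting normal forms of syllable length at most $n$ gives an \emph{upper} bound on $F_{G,S}(n)$, not a lower one. The asserted conclusion $\omega(G,S)\ge\sqrt{(m-1)(n-1)}$ is in fact false: for $G=C_{100}*C_2$ with the standard two-element generating set all generators lie in $A\cup B$, yet $\omega(G,S)\le 2|S|-1=3$ while $\sqrt{(m-1)(n-1)}\approx 10$. A single syllable may cost arbitrarily many letters of $S$, so syllable counting alone cannot bound $S$-growth from below, and even this ``easy'' case needs a real argument.

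Second, and more seriously, the central step --- producing, for an \emph{arbitrary} finite generating set $S$, a ``short extension'' of $S$-length $2$ and a ``long extension'' of $S$-length $3$ whose unrestricted concatenations represent pairwise distinct elements, so that $a_\ell=a_{\ell-2}+a_{\ell-3}$ --- is precisely the content of the theorem, and your outline only names it as the main hurdle without supplying a mechanism. Neither escape route you suggest closes it: Nielsen-type moves replace $S$ by another set with no control relating $\omega(G,S)$ to $\omega(G,S')$, and the inequality $\lambda(g)\le\Lambda\,|g|_S$ again goes in the useless direction. The paper's resolution is geometric, via the action on the Bass--Serre tree: if $S$ contains a hyperbolic element $y$, some $x\in S$ moves its axis and $y,\,xyx^{-1}$ (lengths $1$ and $3$) generate a positive free monoid, giving the positive root of $z^3-z^2-1$, which exceeds $\alpha$; if all generators are elliptic, one finds $x,y\in S$ with disjoint fixed-point sets, so $xy$ is hyperbolic of $S$-length $2$, and --- after disposing of the case of two distinct axes, which already yields $\sqrt2$ --- an element $z\in S$ not preserving the common axis, so that $zxy$ is hyperbolic of $S$-length $3$ with a different axis. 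A ping-pong argument shows $xy$ and $zxy$ generate a positive free monoid, and the lemma converting generator lengths $(2,3)$ of a free submonoid into a growth bound gives exactly the positive root of $z^3-z-1$. Some such construction, uniform in $S$, is the missing ingredient in your plan.
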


We provide here a geometric proof of Mann's lower bound. The question whether there exists a group as in Theorem \ref{thm: amalgamated product} with $\Omega(G)<\sqrt 2$ was again raised by Mann \cite[Problem 3]{Mann}. We answer this question positively by showing in Proposition \ref{prop: example amalg} that the minimal exponential growth rates of $\mathrm{PGL}(2,\mathbb{Z})\cong(C_2\times C_2)*_{C_2} D_6$ and consequently also of $\mathrm{GL}(2,\mathbb{Z})\cong D_8 *_{(C_2\times C_2)} (D_6\times C_2)$, where $D_{2k}$ denotes the group of symmetries of a regular $k$-gon, are
$$\Omega(\mathrm{PGL}(2,\mathbb{Z}))=\Omega(\mathrm{GL}(2,\mathbb{Z}))=\alpha,$$
thus proving that the lower bound of Theorem \ref{thm: amalgamated product} is sharp.

\section{Exponential growth rate}




\begin{lem}
\label{lemma: x y xy when x y free monoid}
Let $G$ be a group and $S$ a finite generating set. Let $x,y\in S$. Suppose that the element $t=xy$ also belongs to $S$.
If $\langle x,y\rangle^{+}$ is a positive free monoid, then \[
\omega(G,S)\geq\left(\frac{1+\sqrt{5}}{2}\right)^{2}.\]

\end{lem}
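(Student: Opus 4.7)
The plan is to exhibit many distinct elements of $G$ of short $S$-length by working inside the submonoid $\langle x,y\rangle^{+}$ and exploiting the fact that the extra letter $t = xy$ is available in $S$. Every positive word in the three letters $\{x,y,t\}$ represents an element of $\langle x,y\rangle^{+}$, and wherever the substring $xy$ appears it may be rewritten as $t$; call a positive $\{x,y,t\}$-word \emph{reduced} if it avoids $xy$ as a substring.

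First I would establish that every element of $\langle x,y\rangle^{+}$ admits a \emph{unique} reduced representative. Existence follows by repeatedly replacing $xy$ by $t$. For uniqueness, suppose two reduced words represent the same element; by the assumed freeness of $\langle x,y\rangle^{+}$ they have the same expansion into $\{x,y\}$ (replacing every $t$ by $xy$), and a short induction on the length of this common expansion, inspecting the first letter and using the ``no $xy$ substring'' condition to rule out the remaining cases, shows that the two reduced words must agree letter by letter. Consequently distinct reduced words represent distinct elements of $G$.

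Next I would count the number $N_{k}$ of reduced words of length $k$. Classifying by the terminal letter $x$, $y$, or $t$, the only constraint in extending a reduced word is that $y$ cannot be appended to a word ending in $x$. Eliminating the auxiliary counts yields the linear recurrence
\[
N_{k+1} = 3N_{k} - N_{k-1}, \qquad N_{0}=1,\ N_{1}=3,
\]
whose characteristic roots are $(3\pm\sqrt{5})/2$. The dominant root is $\phi^{2}$ with $\phi=(1+\sqrt{5})/2$, so $N_{k}^{1/k}\to \phi^{2}$; one also recognises $N_{k}=F_{2k+2}$.

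Finally, each reduced word of length $k$ is in particular an $S$-word of length $k$ representing a distinct element of $G$, hence $F_{G,S}(k)\geq N_{k}$; passing to the $k$-th root yields $\omega(G,S)\geq \phi^{2}$. The only genuinely delicate step is the uniqueness of reduced representatives, which is where the freeness hypothesis on $\langle x,y\rangle^{+}$ is used; once that is in hand, everything reduces to solving the linear recurrence.
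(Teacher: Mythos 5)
Your proposal is correct and follows essentially the same route as the paper: count positive $\{x,y,t\}$-words avoiding the substring $xy$, show distinct such words give distinct group elements via the freeness of $\langle x,y\rangle^{+}$, and derive the recurrence $N_{k+1}=3N_k-N_{k-1}$ with dominant root $\bigl(\frac{1+\sqrt5}{2}\bigr)^2$. The only cosmetic differences are that you run the uniqueness induction from the first letter (the paper cancels last letters and compares two-letter endings of the expansions) and you settle the dominance of the leading root via explicit initial conditions rather than the paper's comparison with $\{x,t\}$-words.
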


\begin{proof}
Consider the set $\mathcal{P}$ of positive words in the letters $x,y,t$ which do not contain $xy$ as a subword. First we note that two different words $U,V$ from $\mathcal{P}$ represent different elements of $G$. Indeed, if $U$ and $V$ end with the same letter, we may cancel it and proceed by induction on length. Otherwise, after rewriting $U,V$ in the letters $x,y$, their 2-letter endings will be different, so $U\ne V$ in $G$.

Now let $X(n),Y(n),T(n)$ be the number of words in $\mathcal{P}$ which end with the letters $x,y,t$ respectively. Also set $W(n)=X(n)+Y(n)+T(n)$.
Then the following system of relations hold true
\begin{eqnarray*}
X(n+1)&=&X(n)+Y(n)+T(n)=W(n),\\
Y(n+1)&=&Y(n)+T(n)=W(n)-X(n),\\
T(n+1)&=&X(n)+Y(n)+T(n)=W(n).
\end{eqnarray*}
Summing these equalities, we get the recurrent relation $W(n+1)=3W(n)-X(n)=3W(n)-W(n-1)$ with a characteristic polynomial $x^2-3x+1$ that has roots $\alpha_1=(3+\sqrt 5)/2=((1+\sqrt 5 )/2)^2$ and $\alpha_2=(3-\sqrt 5)/2$. As an order $2$ linear homogeneous recurrent relation, the series $W(n)$ can be presented in the form $W(n)=c_1\alpha_1^n+c_2\alpha_2^n$ for some constants $c_1,c_2$. Since the set $\mathcal{P}$ contains any word in the letters $x$,$t$, the series $W(n)$ has an exponential growth with exponent bigger than $2$. Because $\alpha_2<1$ and the summand $c_2\alpha_2^n$ is exponentially decreasing, it follows that the coefficient $c_1$ is necessarily positive . Finally, since $f_{G,S}(n)\geqslant W(n)$, we have $\omega(G,S)\geqslant \lim\limits_{n\to \infty}\sqrt[n]{W(n)}=\alpha_1$, which finished the proof.
\end{proof}

The following lemma will be used below in many cases to compute lower bounds for the growth rate of the groups in terms of growth rates of positive free submonoids. Interestingly, the growth rate of some positive submonoids realize the growth rate of the groups in all minimal cases.

\begin{lem} \label{lem: lengthMonoid} Let $G$ be a group generated by a finite set $S$. Suppose that there exist $x_1,\dots,x_k\in G$ generating a positive free monoid inside $G$. Set $\ell_i:=\ell_S(x_i)$, for $i=1,\ldots,k$, and $m=\max\{ \ell_1,\dots,\ell_k\}$. Then $\omega(G,S)$ is greater or equal to the unique positive root of the polynomial
\begin{equation}
 Q(z)=z^m-\sum_{i=1}^k z^{m-\ell_i}.
 \label{char-polynomial}
 \end{equation}
\label{lem: computation for positive monoid}
\end{lem}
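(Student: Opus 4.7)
The plan is to produce a lower bound on $F_{G,S}(n)$ by counting positive words in the free submonoid $\langle x_1,\ldots,x_k\rangle^+$, weighted by the $S$-length of each factor rather than by word-length. Set $a(n)$ to be the number of tuples $(i_1,\ldots,i_r)$ with $\sum_{j=1}^{r}\ell_{i_j}=n$, using the convention $a(0)=1$ and $a(n)=0$ for $n<0$. Since the monoid is free, distinct tuples give distinct elements of $G$; and by the triangle inequality for $\ell_S$, each such element lies in the $S$-ball of radius $n$. Therefore
\begin{equation*}
F_{G,S}(n)\;\geq\;\sum_{j=0}^{n}a(j)\;\geq\;a(n).
\end{equation*}

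Classifying such words by their last factor $x_i$ gives the linear recurrence $a(n)=\sum_{i=1}^{k}a(n-\ell_i)$ for $n\geq 1$, whose characteristic polynomial is exactly $Q(z)=z^{m}-\sum_{i=1}^{k}z^{m-\ell_i}$. That $Q$ has a unique positive root $\alpha$ would then follow by dividing by $z^m$: the condition $Q(z)=0$ becomes $\sum_i z^{-\ell_i}=1$, and the left-hand side is strictly decreasing from $+\infty$ to $0$ as $z$ ranges over $(0,\infty)$, so crosses $1$ exactly once.

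To finish, I would argue that $\lim_{n\to\infty}a(n)^{1/n}=\alpha$, which combined with $F_{G,S}(n)\geq a(n)$ yields the desired $\omega(G,S)\geq\alpha$. The cleanest route is to note that the ordinary generating series $A(z)=\sum_{n\geq 0}a(n)z^{n}=(1-\sum_{i}z^{\ell_i})^{-1}$ has its smallest positive singularity at $z=1/\alpha$, so that $\limsup a(n)^{1/n}=\alpha$ by Cauchy--Hadamard, and then to upgrade this to a genuine limit via supermultiplicativity $a(n+n')\geq a(n)a(n')$ (a direct consequence of concatenation of positive words) and Fekete's lemma. Alternatively, one can apply Perron--Frobenius to the companion matrix of the recurrence to obtain the sharper asymptotic $a(n)\sim c\alpha^{n}$ with $c>0$. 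This asymptotic step is the only point requiring care, since a priori $Q$ has further (complex) roots and one must know that $\alpha$ dominates; but the monotonicity argument above makes dominance transparent, and the remainder of the proof is a formal assembly.
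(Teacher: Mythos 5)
Your counting scheme is essentially the paper's own proof: you count elements of the free submonoid weighted by total $S$-length, derive the linear recurrence $a(n)=\sum_i a(n-\ell_i)$ with characteristic polynomial $Q$, and identify the exponential rate of $a(n)$ with the unique positive root $\alpha$ via the rational generating function and positivity of coefficients (the paper invokes Pringsheim-type results from Bieberbach and Descartes' rule where you use monotonicity of $\sum_i z^{-\ell_i}$; these are interchangeable).

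The one step that would fail as stated is the ``upgrade to a genuine limit.'' If $d=\gcd(\ell_1,\dots,\ell_k)>1$ (say all $\ell_i$ even), then $a(n)=0$ for every $n$ outside the numerical semigroup generated by the $\ell_i$, so $\liminf a(n)^{1/n}=0$ and $\lim a(n)^{1/n}$ does not exist; Fekete's lemma for the supermultiplicative sequence $a(n)$ needs $a(n)>0$ (finiteness of $\log a(n)$), and the Perron--Frobenius alternative is equally unavailable because the companion matrix is then imprimitive, so $a(n)\sim c\alpha^n$ is false (e.g.\ $\ell_1=\ell_2=2$ gives $a(2n)=2^n$, $a(2n+1)=0$, and $Q$ has the root $-\alpha$ of the same modulus, so your ``dominance'' is only weak). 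Fortunately this is harmless for the statement: $\limsup a(n)^{1/n}=\alpha$, which you do obtain from the radius of convergence $1/\alpha$ of $A(z)=(1-\sum_i z^{\ell_i})^{-1}$ (together with Pringsheim or the direct estimate $|z_0|\geq 1/\alpha$ for any zero $z_0$ of the denominator), already suffices: since $\omega(G,S)=\lim_n F_{G,S}(n)^{1/n}$ exists and $F_{G,S}(n)\geq a(n)$, evaluating along a subsequence realizing the limsup gives $\omega(G,S)\geq\alpha$; alternatively use your monotone lower bound $\sum_{j\leq n}a(j)$. With that repair (or simply restricting attention to $n$ divisible by $d$), the argument is complete and coincides with the paper's.
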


\begin{proof}
Let $X_1,\ldots, X_k$ be the words in the alphabet $S\cup S^{-1}$ that give expressions for $x_1,\ldots,x_k$ of length $\ell_1,\ldots,\ell_k$. We will consider the set $\mathcal{P}$ consisting of all words that are made of the pieces $X_1,\ldots,X_k$. Let $T_i(n)$ be the number of words in $\mathcal{P}$ that have length $n$ and end with a piece $X_i$, let $W(n)$ be the sum $T_1(n)+\ldots+T_k(n)$. Then obviously $T_i(n)=W(n-\ell_i)$. This leads to a recurrent relation
$$
W(n)=W(n-\ell_1)+\ldots+W(n-\ell_k).
$$
This relation has the polynomial $Q(z)$ from \eqref{char-polynomial} as the characteristic polynomial and the generating function $R(z)=\sum_{k=0}^{\infty} W(k) z^k$ can be presented as $P(z)/Q(z)$ for some integer polynomial $P(z)$. Since $R(z)$ has non-negative coefficients and also $\lim_{n\to \infty}\sqrt[n]{W(n)}=d\geq 1$, the radius of convergence for $R(z)$ is equal to $1/d$ and according to \cite[Theorem 1.8.1]{Bieberbach} it is reciprocal to one of the positive roots of the characteristic polynomial $Q(z)$. According to Descartes' rule of signs, there is only one such root $z_1$ and hence we get $d=z_1$. This leads to the equality $\lim\sqrt[n]{W(n)}=z_1$ and then because $F_{G,S}(n) \geqslant W(n)$ we get the claimed inequality $\omega(G,S)\geqslant z_1$.
\end{proof}

We finish this section with some simple lower bounds for the growth rate of a free product of cyclic groups with respect to a non canonical generating set.

\begin{lem} \label{lem: free prod of cyclic, non canonical basis} Let $G=C_m*C_n=\langle a,b\mid a^m=b^n=1\rangle\ne C_2*C_2$ and $0<k<m$. Then
$$\omega(C_m*C_n,\{a,ba^{-k}\}) \geq \frac{1+\sqrt{5}}{2}.$$
\end{lem}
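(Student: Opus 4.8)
The plan is to produce, in each case, an explicit positive free submonoid of $G$ whose generators have small $S$-length, and then to conclude via Lemma~\ref{lem: lengthMonoid}. Throughout write $c=ba^{-k}$, so that $S=\{a,c\}$, $c^{-1}=a^{k}b^{-1}=a^{k}b^{n-1}$ and $b=ca^{k}$; since $c$ is cyclically reduced of syllable length two in the free product $\langle a\rangle*\langle b\rangle$ it has infinite order, and $\ell_{S}(c^{-1})=1$.

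First I would dispose of the main case $m\geq 3$. The reduced normal form of $c^{-1}=a^{k}b^{n-1}$ is the single nontrivial $\langle a\rangle$-syllable $a^{k}$ followed by the single nontrivial $\langle b\rangle$-syllable $b^{n-1}$. Choose $\varepsilon\in\{+1,-1\}$ with $k+\varepsilon\not\equiv 0\ (\mathrm{mod}\ m)$ — possible when $m\geq 3$, taking $\varepsilon=+1$ if $k\leq m-2$ and $\varepsilon=-1$ if $k=m-1$ — and set $v=a^{\varepsilon}c^{-1}=a^{k+\varepsilon}b^{n-1}$. Then $v$ again has normal form (nontrivial $\langle a\rangle$-syllable)(nontrivial $\langle b\rangle$-syllable), with leading syllable $a^{k+\varepsilon}\neq a^{k}$; hence $v\neq c^{-1}$, and one checks $\ell_{S}(v)=2$ (it is a product of two elements of $S\cup S^{-1}$ and does not lie in $\{1\}\cup S\cup S^{-1}$). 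Since $c^{-1}$ and $v$ each begin with an $\langle a\rangle$-syllable and end with a $\langle b\rangle$-syllable, the normal form of any positive word in $c^{-1}$ and $v$ is the concatenation of the normal forms of its factors with no cancellation, and since the two possible leading syllables $a^{k}$ and $a^{k+\varepsilon}$ differ, this concatenation recovers the sequence of factors. Thus $\langle c^{-1},v\rangle^{+}$ is free of rank $2$, and Lemma~\ref{lem: lengthMonoid} with $\ell_{1}=1,\ \ell_{2}=2$ gives that $\omega(G,S)$ is at least the positive root of $z^{2}-z-1$, namely $(1+\sqrt{5})/2$.

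Next I would treat $m=2$, which forces $k=1$, $c=ba$, and (since $G\not\cong C_{2}*C_{2}$) $n\geq 3$. Here $a$ has order $2$, so $a^{\pm1}c^{-1}$ collapses to a single (in fact torsion) syllable and the previous construction fails. Instead I use the elements $c=ba$, $\ b^{2}a=cac$, and $b^{n-1}a=ac^{-1}a$: one checks $\ell_{S}(c)=1$ and $\ell_{S}(b^{2}a)=\ell_{S}(b^{n-1}a)=3$ (each is a product of three generators and none lies in the short list of elements of $S$-length $\leq 2$), and each has normal form (nontrivial $\langle b\rangle$-syllable)(nontrivial $\langle a\rangle$-syllable). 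When $n\geq 4$ the leading $\langle b\rangle$-exponents $1,2,n-1$ are pairwise distinct modulo $n$, so, as above, no cancellation occurs in positive products and the concatenated normal forms parse uniquely: $\langle c,\ b^{2}a,\ b^{n-1}a\rangle^{+}$ is free of rank $3$, and Lemma~\ref{lem: lengthMonoid} with lengths $1,3,3$ gives that $\omega(G,S)$ is at least the positive root of $z^{3}-z^{2}-2$, which is $\approx 1.695>(1+\sqrt{5})/2$.

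The main obstacle is the single remaining group $m=2$, $n=3$, i.e.\ $G\cong\mathrm{PSL}(2,\mathbb{Z})$ with $S=\{a,ba\}$: here $2\equiv n-1$, so the exponents in the previous step collapse to two; moreover every infinite-order element of $S$-length $\leq 2$ is a power of $c$, so there is no free submonoid of rank $\geq 3$ with short generators, and the best rank-$2$ submonoid one can build (generated, say, by $c$ and $c^{-1}ac^{-1}$) only yields the root of $z^{3}-z^{2}-1<(1+\sqrt{5})/2$. For this one group the bound has to be obtained separately and hands-on — for instance by writing down explicitly the finite-state language of geodesic words over $\{a,c,c^{-1}\}$ for $\mathrm{PSL}(2,\mathbb{Z})$, computing its growth series as a rational function, and checking that its reciprocal radius of convergence is $\geq(1+\sqrt{5})/2$ (alternatively, by locating inside $G$ an appropriate free subgroup of rank $2$ whose weighted word growth exceeds $(1+\sqrt{5})/2$). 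Carrying out this endpoint verification is the delicate part of the argument.
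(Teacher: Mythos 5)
Your treatment of the cases $m\geq 3$ (free monoid on $c^{-1}$ and $a^{\varepsilon}c^{-1}$, lengths $1,2$) and $m=2$, $n\geq 4$ (free monoid on $ba$, $b^{2}a$, $b^{-1}a$, lengths $1,3,3$) is correct and is essentially the paper's argument, up to a harmless change of generators in the first case. But the remaining case $m=2$, $k=1$, $n=3$, i.e.\ $G=C_2*C_3$ with $S=\{a,ba\}$, is a genuine gap: you correctly identify that Lemma~\ref{lem: computation for positive monoid} cannot reach the golden ratio there (every infinite-order element of $S$-length $\leq 2$ is a power of $c=ba$, and torsion elements cannot be free-monoid generators), but you then only gesture at a possible computation ("write down the geodesic language and check its growth series") without carrying it out. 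Since $\omega(C_2*C_3,\{a,ba\})$ is \emph{exactly} $(1+\sqrt{5})/2$, this endpoint is precisely where the claimed inequality is tight, so it cannot be waved through; moreover your alternative suggestion of finding a free subgroup whose weighted growth \emph{exceeds} $(1+\sqrt{5})/2$ is impossible for the same reason.

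What is actually needed (and what the paper does, following Mach\`i) is an exact count rather than a submonoid bound: writing $t=ba$ one has the presentation $C_2*C_3=\langle a,t\mid a^{2}=1,\ tat=at^{-1}a\rangle$; one shows every element has a geodesic representative $a^{\nu_0}t^{r_1}at^{r_2}a\cdots at^{r_\ell}a^{\nu_1}$ in which consecutive exponents $r_i$, $r_{i+1}$ have opposite signs (otherwise the relations $tat=at^{-1}a$, $t^{-1}at^{-1}=ata$ reduce the number of $t$-letters), proves that distinct words of this form represent distinct group elements by rewriting them in the letters $a,b$ and reading off the $r_i$ from the blocks of $b^{\pm1}$, and then counts such words via $A(n)=T(n-1)$, $T(n)=T(n-1)+A(n-1)$, giving $W(n)=W(n-1)+W(n-2)$ and hence growth rate exactly the golden ratio. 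Your proposal is sound in structure, but until this $C_2*C_3$ verification is actually performed, the lemma is not proved.
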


\begin{proof} Suppose that $m\geq 3$. Upon replacing $a$ by $a^{-1}$  and consequently $k$ by $m-k$ we can assume that $k+1<m$. Consider the elements $x=ba^{-k}$ and $y=ba^{-k-1}=(ba^{-k})a^{-1}$ which have length $1$ and $2$ in our generators  $a,ba^{-k}$. One can easily see from the normal forms that $\langle x,y \rangle^+$ generate a free monoid. Hence, it follows from Lemma \ref{lem: computation for positive monoid} that the growth $\omega(G,S)$ is greater or equal to the unique positive root of  $z^2-z-1$, which is equal to the golden ratio.

Suppose now that $m=2$ and in particular that $k=1$. If $n\geq 4$, then consider the elements $x=ba$, $y=b^2a=(ba)a(ba)$ and $z=b^{-1}a=a(ba)^{-1}a$. Again, from the normal form it follows that $\langle x,y,z \rangle^+$ generate a free monoid. As $x,y,z$ have length $1,3,3$ the growth of $G$ is greater or equal to the positive root of $z^3-z^2-2$ by Lemma \ref{lem: computation for positive monoid}, which is strictly greater than the golden ratio. For $n=3$, the exact growth rate of golden ratio has been computed by Mach\`i (see \cite[VI.A.7 (ii)]{dlHBook}) and we repeat the argument for the convenience of the reader. Writing $t=ba$ we obtain the presentation
$$
C_2*C_3=\langle a,t \mid a^2=1, tat=at^{-1}a \rangle.
$$

Let $W$ be a word of minimal length in $a,t$ representing $x\in C_2*C_3$, that also has minimal possible number of $t$-letters. Since $a$ has order $2$, we can assume that $W$ contains no subwords of the form $a^m$, for $m\neq 1$. Thus $W$ has the form
\begin{equation}
W=a^{\nu_0}t^{r_1}at^{r_2}a\cdot \dots \cdot at^{r_\ell}a^{\nu_1},
\label{at-normalform}
\end{equation}
where the $r_i$'s are nonzero integers and $\nu_0$ and $\nu_1$ are either $0$ or $1$. Moreover, the signs of $r_i$ and $r_{i+1}$ are opposite for $i\in \{1,\ldots,\ell-1\}$ since otherwise we would be able to use the relations $tat=at^{-1}a$ and $t^{-1}at^{-1}=ata$ to reduce the number of $t$-letters. To show that distinct words of such form represent distinct elements of the group we rewrite the word $W$ in the letters $a$ and $b$, substituting $t$ by $ba$. It is easy to see that in the case when $r_1$ is positive and $r_\ell$ is negative we will get a word of the form
\begin{equation}
a^{\nu_0} \underbrace{b a b \ldots a b}_{r_1 \text{ letters } b} a \underbrace{b^{-1} a b^{-1} \ldots a b^{-1}}_{-r_2 \text{ letters } b^{-1}} a \ldots a \underbrace{b^{-1} a b^{-1} \ldots a b^{-1}}_{-r_{\ell} \text{ letters } b^{-1}} a^{\nu_1}.
\label{comb-normalform}
\end{equation}
From a word of this form we can uniquely determine the numbers $\nu_0,\nu_1,r_1,\ldots,r_{\ell}$ and then find the initial word $W$. If the signs of $r_1$ and $r_{\ell}$ are not $(+,-)$, we can rewrite $W$ in a similar form to \eqref{comb-normalform} and the initial form \eqref{at-normalform} can also be determined from the subsequences of letters of $b$ and $b^{-1}$.


It remains to count the number of words, and for this, we denote by $A(n)$ and $T(n)$ the number of words of length $n$ ending in $a$ and $t$ or $t^{-1}$ respectively. Set $W(n)=A(n)+T(n)$. We obtain the recurrent relation
$$\begin{array}{rcl}
A(n)&=&T(n-1),\\
T(n)&=&T(n-1)+A(n-1),
\end{array}
$$
and thus $W(n)=W(n-1)+W(n-2)$, giving the claimed growth rate of golden ratio.
\end{proof}

\section{Group actions on trees}

Let $T$ be a simplicial tree and $x\in \mathrm{Aut}(T)$ be a nontrivial automorphism of $T$. For simplicity, we will restrict to non edge reversing automorphisms of the tree (thus, if $x$ fixes an edge $e$, then it fixes each of the vertices of $e$). Note that an edge reversing automorphism can always be made non edge reversing upon replacing $T$ by its first barycentric subdivision. Define
$$\tau(x)=\min\{ d(v,x(v))\mid v\in T^0 \}$$
as the {\it translation distance} of $x$, where $d:T^0\times T^0\rightarrow \mathbb{N}$ denotes the simplicial distance on the vertex set $T^0$. We will say that $x$ is  {\it elliptic} if $\tau(x)=0$ and  {\it  hyperbolic} if $\tau(x)>0$. Observe that $x$ is elliptic if and only if its fixed point set
$$\mathrm{Fix}(x)=\{ v\in T^0 \mid x(v)=v \}$$
is non empty. Since if $x$ fixes two vertices $v$ and $w$ it also fixes the geodesic segment between $v$ and $w$, the fixed point set $\mathrm{Fix}(x)$ is always connected. If $x$ is hyperbolic, there exists a unique invariant bi-infinite geodesic line $L_x$ in $T$, called the {\it axis} of $x$, such that $x$ acts on $L_x$ by translation by $\tau(x)$. In particular, $d(v,x(v))=\tau(x)$ if and only if $v$ belongs to $L_x$. 

\begin{lem}\label{lem: criterion hyperbolic} Let $x\in \mathrm{Aut}(T)$. Suppose that there exists a subset $\{v_1,v_2,w_1,w_2\}\subset T^0$ of cardinality at least $3$ on a geodesic segment in $T$, such that $x(v_1)=w_1$, $x(v_2)=w_2$ and $d(v_1,w_1)=d(v_2,w_2)>0$. Then $x$ is hyperbolic, $v_1,v_2,w_1,w_2$ belong to $L_x$ and $\tau(x)=d(v_1,w_1)=d(v_2,w_2)$.
\end{lem}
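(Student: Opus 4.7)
The plan is to proceed in two stages: first rule out the possibility that $x$ is elliptic, and then identify the axis and verify that all four points lie on it. Throughout let $\gamma$ denote the geodesic segment containing $\{v_1,v_2,w_1,w_2\}$, and set $\tau=d(v_1,w_1)=d(v_2,w_2)$.

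\emph{Stage 1: $x$ is hyperbolic.} Assume for contradiction that $x$ is elliptic, with fixed subtree $F=\mathrm{Fix}(x)$. Since $x$ is non edge reversing, for every vertex $v\notin F$ the midpoint of $[v,x(v)]$ is a vertex fixed by $x$ (the geodesic $[v,x(v)]$ factors as $[v,\pi_F(v)]\cup[\pi_F(v),x(v)]$ with $\pi_F(v)$ equidistant from the two ends). Applied to $v_1$ and $v_2$, this places the midpoints $m_1$ of $[v_1,w_1]$ and $m_2$ of $[v_2,w_2]$ in $F$; since $[v_i,w_i]\subseteq\gamma$, both $m_1$ and $m_2$ lie on $\gamma$. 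Parametrize $\gamma$ isometrically by $\mathbb{R}$, giving $v_1,v_2,w_1,w_2$ coordinates $p_1,p_2,q_1,q_2$ with $|p_i-q_i|=\tau$. Because $x$ fixes $m_2$, the identity $d(v_1,m_2)=d(x(v_1),x(m_2))=d(w_1,m_2)$ becomes a linear relation on these coordinates, and a short case analysis over the four sign combinations $q_i=p_i\pm\tau$ forces coordinate coincidences that collapse $\{v_1,v_2,w_1,w_2\}$ to cardinality at most $2$, contradicting the hypothesis. Hence $\tau(x)>0$.

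\emph{Stage 2: locating the axis.} With the axis $L_x$ now available, I would use the standard identity $d(v,x(v))=\tau(x)+2d(v,L_x)$, obtained by decomposing $[v,x(v)]$ as $[v,\pi_{L_x}(v)]\cup[\pi_{L_x}(v),x(\pi_{L_x}(v))]\cup[x(\pi_{L_x}(v)),x(v)]$ and using the equivariance $x(\pi_{L_x}(v))=\pi_{L_x}(x(v))$. Applied to $v_i$, this gives $d(v_i,L_x)=(\tau-\tau(x))/2=:\delta\geq 0$; it remains to show $\delta=0$. If $\delta>0$, each geodesic $[v_i,w_i]$ passes through the distinct projections $\pi_{L_x}(v_i),\pi_{L_x}(w_i)\in L_x$, so $I:=\gamma\cap L_x$ is a sub-segment of $L_x$ of length at least $\tau(x)\geq 1$, and the points $v_i,w_i$ lie on the two arms of $\gamma$ outside $I$, each at distance $\delta$ from the endpoint of $I$ where its arm meets it. Since each arm contains a unique vertex at distance $\delta$ from its endpoint, and since $\{v_i,w_i\}$ must meet both arms (otherwise $[v_i,w_i]$ could not cross $I$), every possible assignment of $v_1,v_2,w_1,w_2$ to the two arms collapses them to at most two distinct points, contradicting cardinality $\geq 3$. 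Therefore $\delta=0$, which means $v_i,w_i\in L_x$ and $\tau(x)=\tau$.

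The main obstacle is Stage 1: with two fixed midpoints $m_1,m_2$ on $\gamma$, one has to verify uniformly, across the sign configurations relating each $v_i$ to $w_i$ along $\gamma$, that the distance identity $d(v_1,m_2)=d(w_1,m_2)$ forces $\{v_1,v_2,w_1,w_2\}$ to degenerate. The $\mathbb{R}$-parametrization of $\gamma$ is the cleanest way to dispatch all four sign combinations simultaneously.
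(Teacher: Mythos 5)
Your proposal is correct and takes essentially the same approach as the paper: rule out ellipticity by using that the midpoint of $[v,x(v)]$ is a fixed vertex, then apply the identity $d(v,x(v))=\tau(x)+2d(v,L_x)$ together with the observation that a geodesic segment can contain at most two points at a given positive distance from the axis. The only (harmless) difference is in the elliptic case, where you derive the contradiction from $d(v_1,m_2)=d(w_1,m_2)$ forcing the two midpoints to coincide and hence $\{v_1,w_1\}=\{v_2,w_2\}$, whereas the paper instead bounds $d(v_1,x(v_1))=2d(v_1,\mathrm{Fix}(x))$ by twice the distance to the nearer midpoint; both computations are valid.
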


\begin{proof} Let $v\in T^0$ be any vertex. We claim that the midpoint $m$ between $v$ and $x(v)$ is either a fixed point if $x$ is elliptic or belongs to the axis $L_x$ in case $x$ is hyperbolic. Suppose that $x$ is elliptic and suppose that $v\neq x(v)$. Let $\gamma=[v,w]$ be the geodesic segment from $v$ to $\mathrm{Fix}(x)$. The elliptic element $x$ maps $\gamma$ to a geodesic segment whose intersection with $\gamma$ is precisely $w$. In particular the distance $d(v,x(v))$ is equal to twice the length of $\gamma$ and $w\in \mathrm{Fix}(x)$ is the midpoint between $v$ and $x(v)$. Suppose now that $x$ is hyperbolic. There is a unique geodesic segment $\gamma=[v,w]$ (possibly reduced to a vertex) between $v$ and the axis $L_x$, where $w\in L_x$ is the closest vertex to $v$. The hyperbolic element will translate $w\in L_x$ by $\tau(x)$ and map the segment $\gamma$ to a segment $x(\gamma)$ whose intersection with $L_x$ is precisely $x(w)$. Thus, the vertex $v$ has been moved by $d(v,x(v))=2d(v,L_x)+\tau(x)$. The midpoint between $v$ and $x(v)$ is obviously equal to the midpoint between $w$ and $x(w)$ and thus lies on the axis $L_x$. In particular, the intersection of $L_x$ with the geodesic segment from $v$ to $x(v)$ is an interval of length $\tau(x)$ centered on the midpoint between $v$ and $x(v)$.

Let us now come back to the set up of the lemma. Let $\gamma$ be the geodesic segment containing $v_1,v_2,w_1,w_2$. Let $m_1,m_2$ be the midpoints between $v_1,w_1=x(v_1)$ and $v_2,w_2=x(v_2)$ respectively. Note that since $\{v_1,v_2,w_1,w_2\}$ contains at least three different vertices on a geodesic segment, the midpoints $m_1,m_2$ are distinct and of course also lie on $\gamma$.

Suppose that $x$ were elliptic. Then the segment $\gamma_m$ between $m_1$ and $m_2$ would be contained in the fixed point set of $x$. Note that none of the four points $v_1,v_2,w_1,w_2$ are fixed points, and in particular they cannot lie on $\gamma_m$. Thus, the points $v_1,w_1=x(v_1)$, and similarly $v_2,w_2$, lie in the two different connected components of $\gamma \setminus \gamma_m$. Upon replacing $x$ by its inverse, we can suppose that $v_1$ is closer to $m_2$ than to $m_1$. We compute
$$d(v_1,x(v_1))=2d(v_1,\mathrm{Fix}(x))\leq 2d(v_1,\gamma_m)=2d(v_1,m_2)<2d(v_1,m_1)=d(v_1,w_1),$$
a contradiction.

We have thus established that $x$ is hyperbolic. Suppose that $v_1$ or $v_2$ does not lie on $L_x$. Since $d(v_1,x(v_1))=2d(v_1,L_x)+\tau(x)=d(v_2,x(v_2))$, all four points $v_1,v_2,w_1,w_2$ lie at the same distance from $L_x$ and can thus impossibly lie on a geodesic segment since the cardinality of the set $\{v_1,v_2,w_1,w_2\}$ is at least $3$.
\end{proof}

\begin{lem}
\label{lem: x,y elliptic => xy hyperbolic}Let $x,y$ be elliptic.
If $\mbox{Fix}(x)\cap \mbox{Fix}(y)=\emptyset$, then $xy^{-1}$ is hyperbolic of translation length $\tau(xy^{-1})=2\cdot d(\mbox{Fix}(x),\mbox{Fix}(y))$ and
the axis $L_{xy^{-1}}$ contains the segment between $\mbox{Fix}(x)$ and
$\mbox{Fix}(y)$ and its image under both $x$ and $y$.
\end{lem}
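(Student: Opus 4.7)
The plan is to apply Lemma~\ref{lem: criterion hyperbolic} to $xy^{-1}$ after exhibiting an explicit geodesic segment on which $xy^{-1}$ acts by translation. I would set $d = d(\mathrm{Fix}(x),\mathrm{Fix}(y))$ and pick $a \in \mathrm{Fix}(x)$, $b\in\mathrm{Fix}(y)$ realizing this minimum, so $d(a,b)=d$. The hypothesis $\mathrm{Fix}(x)\cap\mathrm{Fix}(y)=\emptyset$ gives $d\ge 1$, and the uniqueness of nearest points between two disjoint subtrees implies that $b$ is simultaneously the closest point of $\mathrm{Fix}(y)$ to $a$. The segment $[a,b]$ will be the candidate fundamental domain on the future axis.

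The geometric heart of the argument is the claim that the five points
$$y^{-1}(a),\ b,\ a,\ x(b),\ x(y^{-1}(a))$$
lie in this order along a common geodesic segment of length $4d$. I would derive this from the following observation: since $a$ is the closest point of $\mathrm{Fix}(x)$ to $b$, the edge of $[a,b]$ incident to $a$ cannot be fixed by $x$, since otherwise both of its endpoints would be fixed (the non edge-reversing convention), contradicting the minimality of $d$. Hence $x$ sends this initial edge to a different edge at $a$, forcing $[a,x(b)]\cap[a,b]=\{a\}$. The symmetric argument yields $[b,y^{-1}(a)]\cap[a,b]=\{b\}$. Applying $x^{-1}$ and using $x^{-1}(a)=a$, the remaining equality $[x(b),x(y^{-1}(a))]\cap[a,x(b)]=\{x(b)\}$ reduces to the one just proved. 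Concatenating the four length-$d$ pieces yields the desired geodesic.

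With this segment in hand I would apply Lemma~\ref{lem: criterion hyperbolic} with $v_1=b$, $w_1=xy^{-1}(b)=x(b)$, $v_2=a$, $w_2=xy^{-1}(a)=x(y^{-1}(a))$. These four points are pairwise distinct, lie on the geodesic above, and satisfy $d(v_1,w_1)=d(v_2,w_2)=2d>0$. The lemma then gives that $xy^{-1}$ is hyperbolic, $\tau(xy^{-1})=2d$, and $a,b,x(b)\in L_{xy^{-1}}$. Since an axis is a line, it contains $[a,b]$ and $x([a,b])=[a,x(b)]$. To see that $y([a,b])=[y(a),b]$ is also contained in the axis, I would run the same argument for the inverse $(xy^{-1})^{-1}=yx^{-1}$, which has identical axis. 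The main obstacle is the second step: fitting the five points onto one geodesic is where all the tree-geometric content sits, and the non edge-reversing convention together with the minimality of $d$ are essential precisely to rule out the possibility that $x$ preserves the branch at $a$ containing $b$, which would wreck the concatenation.
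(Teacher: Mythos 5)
Your proposal is correct and follows essentially the same route as the paper: exhibit the bridge segment between $\mathrm{Fix}(x)$ and $\mathrm{Fix}(y)$ together with its translates lying on a common geodesic (using minimality of the bridge and the non edge-reversing convention to rule out backtracking) and then apply Lemma~\ref{lem: criterion hyperbolic}. The only cosmetic difference is that the paper uses the four points $y(v_x),v_y,v_x,x(v_y)$, which yields the $x$- and $y$-images of the segment in one application, whereas you use a five-point configuration and recover the $y$-image by running the argument for the inverse $yx^{-1}$.
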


\begin{proof} Let $v_x\in \mbox{Fix}(x)$ and $v_y\in \mbox{Fix}(y)$ be the endpoints of the geodesic segment between $\mbox{Fix}(x)$ and $\mbox{Fix}(y)$. Since the intersection of the geodesic segment $[v_x,v_y]$ with the geodesic segment $x[v_x,v_y]=[v_x,x(v_y)]$ contains only the vertex $v_x$, the three vertices $v_y,v_x,x(v_y)$ lie on a geodesic segment, and similarly for $v_x,v_y,y(v_x)$. It follows that the four vertices $y(v_x),v_y,v_x,x(v_y)$ lie on a geodesic segment,
$$xy^{-1}(y(v_x))=v_x, \  xy^{-1}(v_y)=x(v_y)\ \mathrm{and} \ d(y(v_x),v_x)=d(v_y,x(v_y))=2\cdot d(v_x,v_y).$$
Applying Lemma \ref{lem: criterion hyperbolic}, it is immediate that $xy^{-1}$ is hyperbolic, that its axis contains the four vertices $y(v_x),v_y,v_x,x(v_y)$ and that its translation length is equal to
$$\tau(xy^{-1})=2\cdot d(v_y,v_x)=2\cdot d(\mbox{Fix}(x),\mbox{Fix}(y)),$$
as claimed.
\end{proof}

\medskip
\begin{center}
\includegraphics[width=90mm]{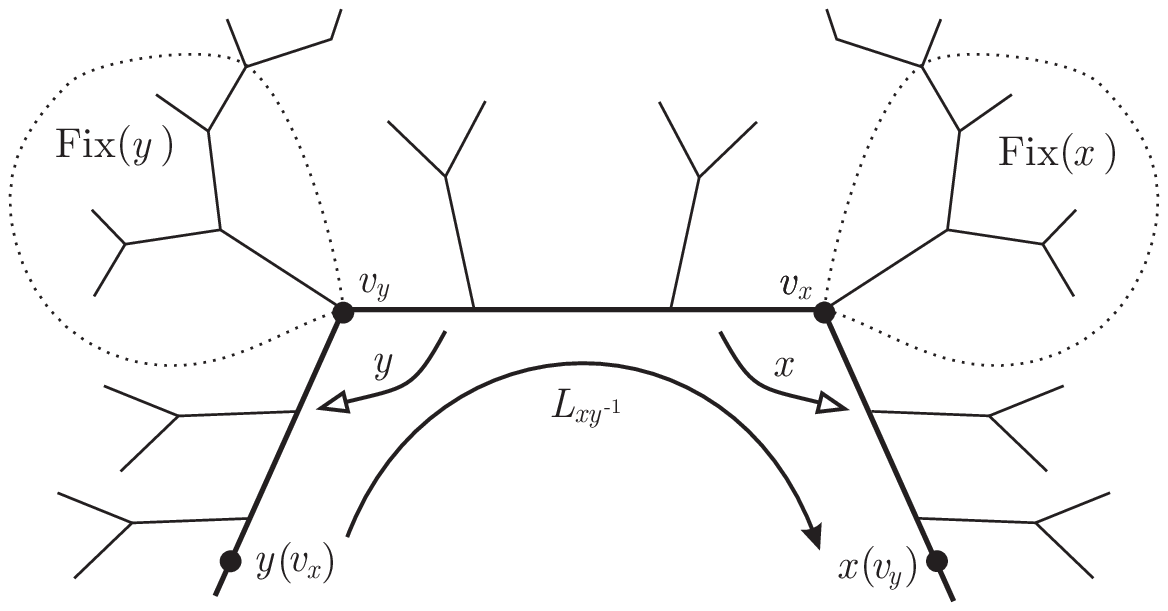}
\end{center}
\medskip

\begin{lem} \label{lem:x(L)=L} Let $x,y$ be elliptic with $\mbox{Fix}(x)\cap \mbox{Fix}(y)=\emptyset$. If $L_{xy}=L_{x^{-1}y}=L_{xy^{-1}}=L_{x^{-1}y^{-1}}$, then $x(L_{xy})=L_{xy}$.
\end{lem}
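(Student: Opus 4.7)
Write $L := L_{xy}$ for the common axis and let $v_x \in \mbox{Fix}(x)$, $v_y \in \mbox{Fix}(y)$ realize $\ell := d(\mbox{Fix}(x), \mbox{Fix}(y))$. By Lemma~\ref{lem: x,y elliptic => xy hyperbolic} applied to each of the four pairs $(x^{\pm 1}, y^{\pm 1})$, the four vertices $x(v_y), v_x, v_y, y(v_x)$ all lie on $L$, in this order. My first step is to establish that $x^2$ and, by the symmetric identity $y^{-2} = (xy)^{-1}(xy^{-1})$, also $y^2$ fix $L$ pointwise: writing $x^2 = (xy)(x^{-1}y)^{-1}$, the element $x^2$ is a product of two hyperbolic elements with common axis $L$ and so fixes both endpoints $\xi^\pm \in \partial T$ of $L$; combined with $x^2(v_x)=v_x$, this forces $x^2$ to fix each of the two rays from $v_x$ to $\xi^\pm$, hence all of $L$.

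Next, from $(xy)(L) = L = (x^{-1}y)(L)$ I deduce $y(L) = x^{-1}(L) = x(L)$, so $x(L)=L$ is equivalent to $y(L)=L$. Suppose for contradiction that $L' := x(L) \neq L$. Since $x^2(L)=y^2(L)=L$ setwise, both $x$ and $y$ swap $L$ and $L'$; in particular they preserve the convex intersection $L \cap L'$. This intersection contains $v_x$ and $x(v_y)$ (as $v_y \in L$ gives $x(v_y) \in L'$, while $x(v_y) \in L$ by the preceding lemma) and, symmetrically, $v_y$ and $y(v_x)$. Convexity in $L$ therefore forces $L \cap L'$ to contain the whole segment from $x(v_y)$ to $y(v_x)$, of length $3\ell$, having both $v_x$ and $v_y$ in its interior. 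As $L\cap L' \subsetneq L$, it must be either a ray terminating in one of the endpoints $\xi^+, \xi^-$ of $L$, or a bounded segment.

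If $L\cap L'$ is a ray to $\xi^+$, then $\xi^+$ is also an endpoint of $L'$, so one of $x(\xi^+), x(\xi^-)$ equals $\xi^+$. If $x(\xi^+)=\xi^+$, the elliptic element $x$ fixes both $v_x$ and $\xi^+$ and hence the ray $[v_x,\xi^+)$, in particular $v_y$, contradicting $\mbox{Fix}(x)\cap\mbox{Fix}(y)=\emptyset$. If instead $x(\xi^-)=\xi^+$, applying $x$ and using $x^2(\xi^-)=\xi^-$ gives $x(\xi^+)=\xi^-$ as well, so $x$ swaps the endpoints of $L$ and $x(L)=L$, contradicting $L'\neq L$. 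The case of a ray to $\xi^-$ is symmetric.

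In the remaining case $L\cap L'$ is a bounded segment; $x$ acts on it as a non-trivial isometry (since $x(v_y)\neq v_y$) fixing the interior point $v_x$, so it must be the reflection about $v_x$ and the segment is symmetric about $v_x$. Then $y$, restricted to this same symmetric segment, is an isometry fixing the interior point $v_y \neq v_x$; the only candidates are the identity (impossible since $y(v_x)\neq v_x$) and the reflection about the midpoint $v_x$ (impossible since this reflection does not fix $v_y$, contradicting $y(v_y)=v_y$). Both contradictions establish $x(L)=L$. I expect the ray case to be the main delicate point, as it requires reasoning about the $x$-action on $\partial T$, whereas the segment case reduces to a clean symmetry argument on a finite interval.
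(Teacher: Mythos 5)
Your proof is correct in substance but takes a genuinely different route from the paper's. The paper argues directly: since $xy$, $x^{-1}y$, $xy^{-1}$, $x^{-1}y^{-1}$ all act on the common axis by the same translation, the alternating products $x^{\varepsilon_0}y^{\varepsilon_1}\cdots x^{\varepsilon_{2k}}(v_y)$ lie on the axis independently of the signs, so $x$ sends infinitely many vertices of $L$ back into $L$ and hence preserves it. You instead argue by contradiction on the structure of $L\cap x(L)$, after first extracting from the hypothesis the stronger facts that $x^2$ and $y^2$ fix $L$ pointwise (via $x^2=(xy)(x^{-1}y)^{-1}$ and $y^{-2}=(xy)^{-1}(xy^{-1})$) and that $x$ and $y$ swap $L$ and $L'=x(L)$. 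Your route buys the explicit intermediate statement $x^2|_L=y^2|_L=\mathrm{id}$ and replaces the orbit bookkeeping by a finite symmetry argument on $L\cap L'$, at the cost of invoking the boundary $\partial T$, which the paper never needs.

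One small repair is needed in the ray case: from $x(\xi^+)=\xi^+$ and $x(v_x)=v_x$ you conclude that $x$ fixes $[v_x,\xi^+)$ pointwise and assert ``in particular $v_y$'', but whether $v_y$ lies on that ray depends on which end of $L$ the ray $L\cap L'$ points to. If $\xi^+$ is the end beyond $x(v_y)$, the fixed ray contains $x(v_y)$ rather than $v_y$; then $x(x(v_y))=x(v_y)$ gives $x(v_y)=v_y$, the same contradiction. Even simpler: you already observed that $x$ maps $L\cap L'$ onto itself, and an isometry of a ray onto itself fixes it pointwise, so in the ray case $x$ fixes $v_y\in L\cap L'$ outright, with no discussion of $\partial T$ or of orientations. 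With either patch the argument is complete; the segment case as you wrote it is airtight.
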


\begin{proof} Keeping the notation of Lemma \ref{lem: x,y elliptic => xy hyperbolic}, the axis of the hyperbolic element $xy^{-1}$ contains the vertices
$$ \dots   ,yx^{-1}yx^{-1}(v_y),\ yx^{-1}(v_y),\ v_y,\ x(v_y),\ xy^{-1}x(v_y),\ xy^{-1}xy^{-1}x(v_y)\dots$$
Since the axis of $xy$, $x^{-1}y$, $xy^{-1}$ and $x^{-1}y^{-1}$ all agree, and by the proof of Lemma \ref{lem: x,y elliptic => xy hyperbolic}, we know that all these elements act as the same translation on their common axis, it follows that the four hyperbolic elements all act on the above ordered set of vertices by translation to the right by $1$. We deduce that for any $\varepsilon_0\in \{\pm 1,0\}$ and $\varepsilon_1,...,\varepsilon_{2k}\in \{\pm 1\}$, the vertex $w=x^{\varepsilon_0}y^{\varepsilon_1}\cdot \dots \cdot  x^{\varepsilon_{2k}}(v_y)$ belongs to the common axis (and is independent of the signs of the $\varepsilon_i$'s), and in particular, also $x(w)$. Since this holds for infinitely many vertices on $L_{xy}$, the lemma is proven.
\end{proof}

\subsection*{Ping-Pong and free (semi-)groups} One classical way to find free subgroups in groups going back to Klein's study of Schottky groups is through so called Ping-Pong arguments. We recall here the statements, both in the subgroup and semi(sub)group case, and refer to \cite[Proposition 1.1]{Tits} and \cite[Proposition VII.2]{dlHBook} respectively for the proofs. The latter reference treats the case $k=2$ of Lemma \ref{lem: PingPongMonoid}, but the generalization to arbitrary $k$ is straightforward.

\begin{lem}[Ping-Pong Lemma] \label{lem: PingPong} Let $G$ be a group acting on a set $Z$. Let $G_X,G_Y$ be two subgroups of $G$. Suppose that there exists subsets $X,Y\subset Z$ and a base point $z\in Z\setminus (X\cup Y)$ such that
$$\begin{array}{ll}
x(Y\cup \{v\} )\subset X & \forall x\in G_X\setminus \{\mathrm{Id}\}, \\
y(X\cup \{v\} )\subset Y & \forall y\in G_Y\setminus \{\mathrm{Id}\}.
\end{array}
$$
Then the subgroup $\langle G_X,G_Y \rangle$ is a free product $G_X*G_Y$.
\end{lem}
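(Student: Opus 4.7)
The plan is the classical ping-pong argument: show that any nontrivial reduced word in the elements of $G_X\setminus\{\mathrm{Id}\}$ and $G_Y\setminus\{\mathrm{Id}\}$ acts nontrivially on $Z$, and hence cannot be the identity in $G$. Concretely, take a word of the form $w=g_1g_2\cdots g_n$ in which each $g_i$ lies in one of $G_X\setminus\{\mathrm{Id}\}$ or $G_Y\setminus\{\mathrm{Id}\}$ and consecutive letters come from different factors. The strategy is to track the $G$-orbit of the base point $z$ (identified with the $v$ appearing in the displayed hypotheses) under the successive action of $g_n,g_{n-1},\ldots,g_1$, showing that $w(z)$ ends up in $X\cup Y$ and in particular is different from $z$.

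For the main case, suppose that $g_n\in G_X\setminus\{\mathrm{Id}\}$ and $g_1\in G_X\setminus\{\mathrm{Id}\}$. The hypothesis $x(Y\cup\{z\})\subset X$ for $x=g_n$ gives $g_n(z)\in X$. Then $g_{n-1}\in G_Y\setminus\{\mathrm{Id}\}$ satisfies $g_{n-1}(X\cup\{z\})\subset Y$, so $g_{n-1}g_n(z)\in Y$. Iterating, each successive letter flips the containment between $X$ and $Y$, and because $g_1\in G_X$ we conclude $w(z)\in X$. Since $z\notin X\cup Y$, this forces $w(z)\ne z$, hence $w\ne\mathrm{Id}$ in $G$.

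The remaining three start-end combinations reduce to the one above by conjugation. Fix any nontrivial $x_0\in G_X$; given a reduced word $w$ whose outermost letters are not both in $G_X$, the conjugate $x_0wx_0^{-1}$ can be rewritten as a reduced word whose first and last letters do lie in $G_X$ (one may need to multiply out at most two factors, using that $G_X$ is a subgroup), and $x_0wx_0^{-1}=\mathrm{Id}$ iff $w=\mathrm{Id}$. Thus the previous paragraph's computation settles all cases.

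I do not foresee any genuine obstacle: the argument is mechanical once the inductive ping-pong between $X$ and $Y$ is set up. The only points requiring care are (i) the bookkeeping across the four start/end combinations, and (ii) making explicit that the base point $z\notin X\cup Y$ is the sole ingredient producing a contradiction with $w=\mathrm{Id}$. Since Tits' original statement and the exposition in \cite[Proposition VII.2]{dlHBook} handle exactly this, one may alternatively cite them directly, as the authors appear to do.
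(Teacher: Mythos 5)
Your main computation is the right one, but the reduction of the remaining cases by conjugation is both unnecessary and, as stated, incorrect. The claim that $x_0wx_0^{-1}$ can always be rewritten as a reduced word whose first and last letters lie in $G_X$ fails when $G_X$ has order $2$: if $G_X=\{\mathrm{Id},x_0\}$ and $w=x_0\,y$ with $y\in G_Y\setminus\{\mathrm{Id}\}$, then $x_0wx_0^{-1}=yx_0$, which again has mixed type, and conjugating once more returns you to $x_0y$; no choice of conjugator in $G_X$ ever produces a word with both outer letters in $G_X$. This is exactly the point at which the classical basepoint-free ping-pong lemma needs the hypothesis that one of the factors has at least three elements (choose $x_0\neq g_1^{-1}$), a hypothesis which is absent from the statement here and which cannot be added, since the paper applies the lemma precisely to factors of order $2$ (for instance in Lemma \ref{lem: xi yi free prod} and Proposition \ref{prop: gp gen by hyp and ell}, and in the $C_2*C_2$ discussions). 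So as written your argument does not show, e.g., that $x_0y\neq\mathrm{Id}$ when both factors are $C_2$.

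The fix is to delete the conjugation step altogether: the basepoint hypothesis is exactly what makes the direct evaluation work in all four start/end combinations. For any nonempty reduced word $w=g_1\cdots g_n$, apply the letters to $z$ from the right; the last letter sends $z$ into $X$ or $Y$ according to its factor, and each subsequent letter keeps the orbit inside $X\cup Y$ (flipping between the two sets), so $w(z)\in X\cup Y$ regardless of which factor $g_1$ belongs to. Since $z\notin X\cup Y$, this gives $w(z)\neq z$, hence $w\neq\mathrm{Id}$ --- note your own argument already only uses $z\notin X\cup Y$, not $w(z)\in X$ specifically, so nothing else changes. For the record, the paper does not prove this lemma at all but cites Tits and de la Harpe; the point of the basepoint formulation used here is precisely that it removes the cardinality restriction of the classical statement, which is why your imported conjugation trick is the wrong tool.
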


\begin{lem}[Ping-Pong Lemma for positive free monoids] \label{lem: PingPongMonoid} Let $G$ be a group acting on a set $X$. Let $x_1,\ldots,x_k\in G$ and $X_1,\ldots,X_k$ be nonempty subsets of $X$. Suppose that $X_i\cap X_j=\emptyset$ whenever $i\neq j$ and that
$$
x_i(X_1\cup\dots \cup X_k)\subset X_i, \quad \forall i\in \{ 1,\dots,k\}.
$$
Then the semigroup generated by $x_1,\ldots,x_k$ is a positive free monoid.
\end{lem}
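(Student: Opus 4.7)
The plan is to verify, directly, that the natural monoid homomorphism from the free monoid on $k$ letters into $G$ sending the $i$-th generator to $x_i$ is injective; this is exactly what it means for $x_1,\dots,x_k$ to generate a positive free monoid. So the task reduces to showing that two distinct positive words in $x_1,\dots,x_k$ cannot represent the same element of $G$.

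The core of the argument would be the following tracking claim: for every nonempty positive word $w=x_{i_1}x_{i_2}\cdots x_{i_n}$ and every $z\in X_1\cup\cdots\cup X_k$, the image $w(z)$ lies in $X_{i_1}$. I would prove this by a one-line induction on $n$: the base case $n=1$ is the hypothesis $x_i(X_1\cup\cdots\cup X_k)\subset X_i$, and the inductive step uses that $x_{i_2}\cdots x_{i_n}(z)\in X_{i_2}\subset X_1\cup\cdots\cup X_k$, so that $x_{i_1}$ sends it into $X_{i_1}$.

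Given this, suppose two positive words $w=x_{i_1}\cdots x_{i_n}$ and $w'=x_{j_1}\cdots x_{j_m}$ satisfy $w=w'$ in $G$, and argue by induction on $\min(n,m)$. When both $n,m\geq 1$, applying both sides to any $z\in X_1\cup\cdots\cup X_k$ and invoking the tracking claim puts $w(z)\in X_{i_1}$ and $w'(z)\in X_{j_1}$; disjointness of the $X_\ell$'s then forces $i_1=j_1$, and since $x_{i_1}$ is invertible in the ambient group $G$ I left-multiply by $x_{i_1}^{-1}$ to shorten both words and recurse. The base case, where one word is empty, is handled by picking $z$ in some $X_\ell$ with $\ell$ different from the first letter of the nonempty word: then $z$ is fixed by a word whose image is forced into a disjoint $X_{j_1}$, contradicting disjointness provided $k\geq 2$.

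No step poses a real obstacle; the whole scheme is the classical ping-pong argument in positive form, and it works because positivity of the words removes the need for the ``two-sided'' subsets of the subgroup version (Lemma~\ref{lem: PingPong}). The two bookkeeping points worth flagging are the implicit requirement $k\geq 2$ (needed to locate a test point in some $X_\ell$ outside $X_{j_1}$) and the essential use of invertibility in $G$ to left-cancel $x_{i_1}$, which is why the lemma genuinely lives in a group rather than in the context of a mere semigroup action on $X$.
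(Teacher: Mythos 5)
Your argument is correct: the tracking claim, the first-letter comparison via disjointness, and left-cancellation in $G$ constitute exactly the classical ping-pong argument, which is what the paper itself invokes here --- it gives no proof of this lemma but refers to \cite[Proposition VII.2]{dlHBook} for the case $k=2$ and notes the generalization is straightforward. Your two flagged points are apt: the statement implicitly assumes $k\geq 2$ (for $k=1$ one could take $x_1=\mathrm{Id}$ with $X_1=X$), which holds in every application in the paper, and invertibility of the $x_i$ (or merely that they act injectively) is indeed what the argument needs beyond the semigroup setting.
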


\begin{lem}
\label{lem: Intersection axis}Let $x,y$ be hyperbolic. Suppose that the length of the intersection of the axis $L_{x}\cap L_{y}$
is strictly smaller than the minimum of the translation lengths of
$x$ and $y$, then $\langle x,y\rangle\cong F_{2}$.
\end{lem}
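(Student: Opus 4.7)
The plan is to apply the Ping-Pong Lemma (Lemma \ref{lem: PingPong}) with $G_X=\langle x\rangle$ and $G_Y=\langle y\rangle$, both of which are infinite cyclic since $x$ and $y$ are hyperbolic. The hypothesis that $L_x\cap L_y$ has length strictly less than $\min(\tau(x),\tau(y))$ forces this intersection to be a finite (possibly empty) geodesic segment, so in particular $L_x\neq L_y$.

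Denote $L_x\cap L_y=[a,b]$, with the convention $a=b$ when the intersection is a single vertex, and letting $b\in L_x$, $a\in L_y$ be the endpoints of the bridge between $L_x$ and $L_y$ when the intersection is empty. Let $\xi_x^\pm,\xi_y^\pm$ denote the four ends at infinity of $L_x$ and $L_y$, with $\xi_x^+$ the end of $L_x$ reached from $b$ going away from $a$ and the remaining three labels chosen analogously. After possibly replacing $x$ by $x^{-1}$ and $y$ by $y^{-1}$, we may assume that $x$ translates $L_x$ toward $\xi_x^+$ and $y$ translates $L_y$ toward $\xi_y^+$.

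Next, let $X_+$ and $Y_+$ be the connected components of $T\setminus\{b\}$ containing $\xi_x^+$ and $\xi_y^+$ respectively, and let $X_-$ and $Y_-$ be the connected components of $T\setminus\{a\}$ containing $\xi_x^-$ and $\xi_y^-$. The directions from $b$ toward $\xi_x^+$ and $\xi_y^+$ are distinct (they are the two directions in which $L_x$ and $L_y$ separate at $b$), and similarly at $a$, so the four subtrees are pairwise disjoint. Set $X=X_+\cup X_-$ and $Y=Y_+\cup Y_-$, and pick the base point $v=a$, which lies in neither $X$ nor $Y$.

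The crucial observation is that for every $w\in Y\cup\{v\}$, the projection of $w$ onto $L_x$ equals $a$ or $b$, since any geodesic from $Y$ back to $L_x$ must exit through $a$ or $b$, and $v=a$ already lies on $L_x$. Applying $x^n$ with $n\neq 0$ shifts this projection along $L_x$ by $n\tau(x)$; since $|n|\tau(x)>\mathrm{length}(L_x\cap L_y)$ by hypothesis, the shifted projection lies strictly outside $[a,b]$, so $x^n(w)$ falls into $X_+$ for $n>0$ and into $X_-$ for $n<0$. Hence $x^n(Y\cup\{v\})\subset X$, and the symmetric computation yields $y^m(X\cup\{v\})\subset Y$, so the Ping-Pong Lemma delivers $\langle x,y\rangle\cong\langle x\rangle*\langle y\rangle\cong F_2$. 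The main delicate point will be the bookkeeping in the degenerate cases $a=b$ (the intersection is a single vertex) and $L_x\cap L_y=\emptyset$ (genuine bridge), but the definitions of $a,b$ and of the subtrees above are tailored so that the projection identity and the ping-pong computation go through uniformly in all three situations.
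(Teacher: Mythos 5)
Your overall strategy is the same as the paper's: a base-point ping-pong (Lemma \ref{lem: PingPong}) for the infinite cyclic groups $\langle x\rangle$ and $\langle y\rangle$, driven by the observation that every point of $Y\cup\{v\}$ projects onto $L_x$ inside the overlap region, while $x^{n}$ ($n\neq 0$) displaces that projection by $|n|\tau(x)$, which exceeds the length of $L_x\cap L_y$ by hypothesis. The paper implements this by deleting the whole segment $\gamma$ (equal to $L_x\cap L_y$, or to the bridge between the axes when they are disjoint) and taking $X$, $Y$ to be the components of $T\setminus\gamma$ meeting $L_x\setminus\gamma$, resp. $L_y\setminus\gamma$, with $v\in\gamma$. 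In the two configurations where $L_x\cap L_y\neq\emptyset$ your implementation (cutting only at the vertices $a$ and $b$) is correct and equivalent to the paper's.

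The gap is in the case $L_x\cap L_y=\emptyset$, which is genuinely covered by the lemma and which you dismiss as bookkeeping: there your definitions do not merely need adjusting, they cannot satisfy the hypotheses of Lemma \ref{lem: PingPong} at all. With $b\in L_x$ and $a\in L_y$ the endpoints of the bridge, removing $b$ does not disconnect $L_y$ (the bridge meets $L_y$ only at $a$), so the component $Y_+$ of $T\setminus\{b\}$ containing $\xi_y^+$ contains all of $L_y$, hence your base point $v=a$ lies in $Y$; symmetrically $b\in X_-$. In fact $X_-\cup Y_+$ is all of $T^0$: a vertex $w\notin\{a,b\}$ outside both would have its geodesic to $b$ passing through $a$ and its geodesic to $a$ passing through $b$, forcing $d(w,b)=d(w,a)+d(a,b)$ and $d(w,a)=d(w,b)+d(a,b)$ simultaneously, which is impossible. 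So no base point outside $X\cup Y$ exists, the claimed pairwise disjointness of the four subtrees also fails ($X_-$ and $Y_+$ overlap), and the labelling of $\xi_x^{\pm}$ as "going away from $a$" is undefined since $a\notin L_x$. The repair is exactly the paper's choice: cut along the entire closed bridge $\gamma=[b,a]$ rather than at the two vertices, let $X$ (resp. $Y$) be the union of components of $T\setminus\gamma$ containing $L_x\setminus\{b\}$ (resp. $L_y\setminus\{a\}$), and pick $v\in\gamma$; then every point of $Y\cup\{v\}$ projects to $b$ on $L_x$, any nonzero power of $x$ pushes it into $X$, and one definition handles all three configurations uniformly.
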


\begin{proof} Let $\gamma$ be either the geodesic segment between $L_x$ and $L_y$ (if $L_x\cap L_y=\emptyset$) or the intersection   $L_x\cap L_y$. Let $X$, respectively $Y$, be the intersection of the vertex set $T^0$ with the two connected components of $T\setminus \gamma$ containing $L_x\setminus \gamma$, resp. $L_y\setminus \gamma$.

\medskip
\begin{center}
\includegraphics[width=120mm]{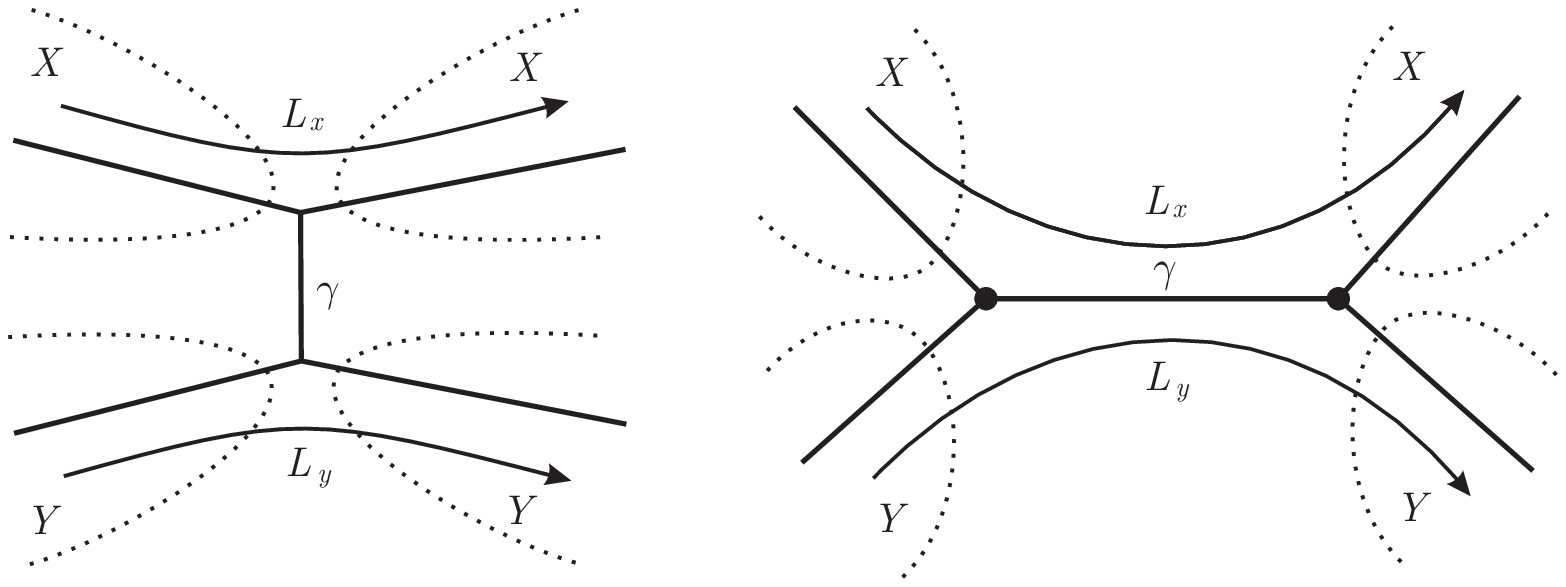}
\end{center}
\medskip
Pick a vertex $v\in \gamma\cap T^0$. Observe that $x^k(Y\cup \{ v\})\subset X$ and $y^k(X\cup \{ v\})\subset Y)$ for any $k\in \mathbb{Z}\setminus \{ 0\}$. Thus the conclusion follows from the Ping-Pong Lemma \ref{lem: PingPong}.
\end{proof}

\begin{lem} \label{lem: different axis pos free monoid} Let $x,y$ be hyperbolic. If $L_x\neq L_y$ then either $x,y$ generate a positive free monoid or $x^{-1},y$ do.
\end{lem}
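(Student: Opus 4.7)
The plan is to apply Lemma~\ref{lem: PingPongMonoid} with $k=2$ subsets $X_1, X_2 \subset T^0$, for one of the pairs $\{x,y\}$ or $\{x^{-1},y\}$, chosen as the two connected components of $T\setminus\{o\}$ (or $T\setminus\{e\}$) for a suitable pivot vertex $o$ or edge $e$.

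First I would reduce to the case of distinct attracting ends: if $\omega_+^x=\omega_+^y$, replacing $x$ by $x^{-1}$ yields attracting end $\omega_-^x\neq\omega_+^x=\omega_+^y$. Setting $a=\omega_+^x$, $b=\omega_+^y$ after this possible switch, let $\gamma=[a,b]$ be the unique bi-infinite geodesic between them, and let $p_x,p_y\in\gamma$ be the branching points where $L_x$ and $L_y$ respectively leave $\gamma$ (taking $p_x$ to be the end $b$ if $L_x=\gamma$, and similarly for $p_y$). I would then distinguish two scenarios based on the relative positions of $p_x$ and $p_y$ on $\gamma$. In the first scenario, $p_x$ is strictly closer to $a$ than $p_y$, so the axes do not overlap on $\gamma$; I pick an edge $e$ of $\gamma$ strictly between $p_x$ and $p_y$ (hence $e\notin L_x\cup L_y$) and let $X_1,X_2$ be the two components of $T\setminus\{e\}$ containing $a$ and $b$ respectively, so that $L_x\subset X_1$ and $L_y\subset X_2$. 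In the second scenario the axes overlap on $\gamma$; I would then further replace $x$ by $x^{-1}$ and work with the new geodesic $\gamma'=[\omega_-^x,b]$, on which a direct check shows that both axes have their branching points at the common vertex $p_x$ but use distinct outgoing directions there (toward $\omega_-^x$ for $L_x$ and toward $b$ for $L_y$). Taking $o=p_x$ as the pivot and letting $X_1,X_2$ be the components of $T\setminus\{o\}$ containing $\omega_-^x$ and $b$ recovers the separation $L_x\cap X_2=L_y\cap X_1=\emptyset$.

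The ping-pong verification reduces in each scenario to the same geometric observation. For any $v\in X_2$, the nearest point of $L_x$ to $v$ is the pivot (since $L_x\cap X_2=\emptyset$), so $x^{\pm 1}(v)$ sits at the same distance from $x^{\pm 1}(\mathrm{pivot})\in L_x\cap X_1$, in the branch at $x^{\pm 1}(\mathrm{pivot})$ corresponding to the branch of $v$ at the piv
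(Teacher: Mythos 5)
Your overall strategy (ping-pong for monoids after normalizing orientations along the geodesic $[a,b]$) is reasonable, but both scenarios have genuine problems as written. In the first scenario the two half-trees $X_1,X_2$ cut out by the edge $e$ never satisfy the hypothesis of Lemma~\ref{lem: PingPongMonoid}: besides $x(X_2)\subset X_1$ you also need $x(X_1)\subset X_1$, and this fails for these sets. Indeed, since $L_x\subset X_1$, every vertex of $X_2$ projects to $L_x$ at $p_x$, hence every vertex of $x^{-1}(X_2)$ projects to $L_x$ at $x^{-1}(p_x)\neq p_x$; but any vertex whose projection to $L_x$ differs from $p_x$ lies in $X_1$, because its geodesic to $X_2$ runs along $L_x$ into $p_x$, then along $\gamma$ and across $e$. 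Thus $x^{-1}(X_2)\subset X_1$, i.e.\ $X_2\subset x(X_1)$, so $x(X_1)\not\subset X_1$, and the same argument rules out using $x^{-1}$ instead. Your verification only treats $v\in X_2$ under $x^{\pm1}$, which is the condition of the group Ping-Pong Lemma~\ref{lem: PingPong}, not of the monoid lemma; in this scenario the axes are in fact disjoint, so Lemma~\ref{lem: Intersection axis} gives $\langle x,y\rangle\cong F_2$ and hence the free monoid, but that is a different argument from the one you set up. (Also, the nearest point of $L_x$ to a vertex of $X_2$ is $p_x$, not the pivot $e$.)

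In the second scenario the ``direct check'' is false in general, because the overlap $L_x\cap L_y$ need not lie on $\gamma$. If $L_x$ and $L_y$ leave $\gamma$ at the same vertex $p_x=p_y$ and through the same edge, then $L_x\cap L_y$ is a segment $\sigma$ sticking out of $\gamma$, and on $\gamma'=[\omega_-^x,b]$ the two axes branch off at the two distinct endpoints of $\sigma$, not both at $p_x$. With your pivot $o=p_x$ the separation fails: $\sigma\setminus\{p_x\}$ and the ray of $L_y$ towards $\omega_-^y$ lie in $X_1$, and $y$ sends deep vertices of that ray to vertices still in $X_1$, so $y(X_1)\not\subset X_2$. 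This configuration also slips between your two cases as literally stated, since the axes meet $\gamma$ in a single point. The paper's remedy, which your argument is missing, is to take the pivot at an end $v$ of the overlap $L_x\cap L_y$ itself, orient $x$ and $y$ so that both translate $v$ away from the overlap, and use as ping-pong sets not half-trees but the two branches at $v$ in the directions in which $L_x$ and $L_y$ leave the overlap; these sets are small enough that both inclusions $x(X_1\cup X_2)\subset X_1$ and $y(X_1\cup X_2)\subset X_2$ actually hold. In the part of your scenario 2 where the overlap does lie in $\gamma$ (i.e.\ $p_y$ strictly closer to $a$ than $p_x$), your choice $o=p_x$ coincides with an end of the overlap and the argument does go through, although the stated reason is off: both axes leave $\gamma'$ at $p_x$ in the direction of $a$ (the directions toward $\omega_-^x$ and $b$ lie along $\gamma'$), and the correct point is that $L_x\cap X_2=L_y\cap X_1=\emptyset$ with $x^{-1}$, $y$ translating $p_x$ into $X_1$, $X_2$ respectively.
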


\begin{proof} If the intersection $L_x\cap L_y$ is empty or equal to one vertex, then the claim follows from Lemma \ref{lem: Intersection axis}. Otherwise it contains a geodesic segment $\gamma$ (possibly infinite on one end). Let $v$ be an end of $\gamma$. Upon replacing $x$ by $x^{-1}$ we can suppose that $d(v,x(v))=d(\gamma,x(v))$ (that is to say, $x$ moves $v$ away from $\gamma$). Similarly for $y$.

Let $v_x$, respectively $v_y$, be the vertices on $L_x$, resp. $L_y$, at distance $1$ from $v$ and not on $\gamma$. Set
$$X=\{w\in T^0\mid d(w,v)>d(w,v_x)\}$$
and
$$Y=\{w\in T^0\mid d(w,v)>d(w,v_y)\}.$$
Observe that $x(X\cup Y)\subset X$ and $y(X\cup Y)\subset Y$. The conclusion now follows from the Ping-Pong Lemma \ref{lem: PingPongMonoid} for positive monoids.
\end{proof}

\subsection*{Example: amalgamated products}

Let $G=A*_C B$ be an amalgamated product. Define the associated tree of $G$ as follows: The vertices of $T$ are the right cosets of $G$ by $A$ and $B$:
$$T^0=G/A \amalg G/B.$$
The edges are the right cosets of $G$ by $C$:
$$T^1=G/C.$$
The vertices of the edge $gC$, for $g\in G$, are $gA$ and $gB$. Observe that the tree is bipartite. The group $G$ acts on $T$ by left multiplication. An element $x\in G< \mathrm{Aut}(T)$ is elliptic if and only if $x$ is conjugated to $A$ or $B$ (or in particular to $C$). If $G$ is a free product, i.e. $C$ is trivial, then the fixed point set of any elliptic element consists of a single vertex.

\subsection*{Subgroups of free products}

\begin{lem}\label{lem: xi yi free prod}
Let $G=A*B$ be a free product. Let $x_1,\dots,x_n,y_1,\dots,y_m\in G$ be elliptic elements. Suppose that $\mbox{Fix}(x_1)=\dots =\mbox{Fix}(x_n)\neq\mbox{Fix}(y_1)=\dots =\mbox{Fix}(y_m)$, then  \[
\langle x_1,\dots, x_n,y_1,\dots, y_m \rangle=\langle  x_1,\dots, x_n\rangle*\langle y_1,\dots, y_m\rangle.\]
\end{lem}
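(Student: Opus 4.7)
The plan is to apply the Ping-Pong Lemma \ref{lem: PingPong} to the action of $G = A*B$ on its Bass-Serre tree $T$. Discarding any identity elements from the lists, I may assume all $x_i, y_j$ are nontrivial, so $H_X := \langle x_1, \ldots, x_n \rangle$ is contained in $\mathrm{Stab}(v_x)$ and $H_Y := \langle y_1, \ldots, y_m \rangle$ in $\mathrm{Stab}(v_y)$, where $\{v_x\} = \mathrm{Fix}(x_i)$ and $\{v_y\} = \mathrm{Fix}(y_j)$. The key structural input is that since the amalgamating subgroup is trivial in a free product, every edge of $T$ has trivial stabilizer; consequently every nontrivial elliptic element fixes exactly one vertex, and in particular each nontrivial $h \in H_X$ fixes only $v_x$ and cannot fix any edge incident to it.

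To build the ping-pong sets, let $\gamma$ be the geodesic from $v_x$ to $v_y$; if $d(v_x, v_y) = 1$, I first pass to the barycentric subdivision of $T$, so from now on I may assume $d(v_x, v_y) \geq 2$. Choose an interior vertex $z$ of $\gamma$ and let $X$ (respectively $Y$) be the set of vertices of $T$ whose nearest point on $\gamma$ lies strictly on the $v_x$-side (respectively $v_y$-side) of $z$. Then $X$ and $Y$ are disjoint and $z \notin X \cup Y$. Observe that $Y \cup \{z\}$ lies entirely in the branch of $T \setminus \{v_x\}$ containing $v_y$; a nontrivial $h \in H_X$ fixes $v_x$ but sends the edge at $v_x$ pointing towards $v_y$ to a different edge at $v_x$, and therefore carries that entire branch into another branch based at $v_x$, which is contained in $X$. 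Thus $h(Y \cup \{z\}) \subset X$, and symmetrically $h'(X \cup \{z\}) \subset Y$ for every nontrivial $h' \in H_Y$. The Ping-Pong Lemma then yields $\langle H_X, H_Y \rangle = H_X * H_Y$, which is the claim.

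The only point that needs care is the edge case $d(v_x, v_y) = 1$, where $\gamma$ has no interior vertex to serve as a base point disjoint from both sides. Replacing $T$ by its barycentric subdivision is legitimate because $G$ preserves the bipartition $T^0 = G/A \amalg G/B$ and so admits no edge inversions; the subdivision inserts a midpoint on $[v_x, v_y]$ that is moved by every nontrivial element of $H_X \cup H_Y$ (since such an element does not fix the edge $[v_x,v_y]$), after which the argument above runs through uniformly.
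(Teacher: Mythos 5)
Your proof is correct and follows essentially the same route as the paper: a ping-pong argument on the Bass--Serre tree, using that edge stabilizers of a free product are trivial so each nontrivial elliptic element fixes a single vertex, with the attracting sets $X,Y$ separated along the geodesic between the two fixed vertices. Your barycentric-subdivision treatment of the case $d(v_x,v_y)=1$ is just a careful implementation of the paper's choice of base point in the \emph{interior} of the geodesic segment, so the two arguments differ only cosmetically.
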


\begin{proof} Let $\gamma$ be  the interior of the geodesic segment between $\mbox{Fix}(x_1)=\dots =\mbox{Fix}(x_n)$ and $\mbox{Fix}(y_1)=\dots =\mbox{Fix}(y_m)$. Let $X$, respectively $Y$, be the connected component of $T\setminus \gamma$ containing $\mbox{Fix}(x_1)=\dots =\mbox{Fix}(x_n)$, resp. $\mbox{Fix}(y_1)=\dots =\mbox{Fix}(y_m)$. Pick $v\in \gamma$.

\medskip
\begin{center}
\includegraphics[width=80mm]{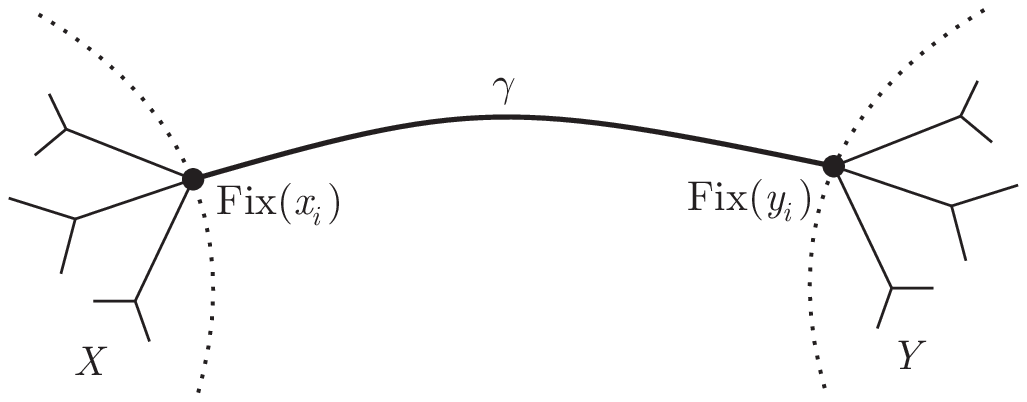}
\end{center}
\medskip

Observe that $x(Y\cup \{ v\})\subset X$ for any $x\in \langle  x_1,\dots, x_n\rangle \setminus \{ \mathrm{id} \} $ and $y(X\cup \{ v\})\subset Y)$ for any $y\in \langle y_1,\dots, y_m\rangle \setminus \{ \mathrm{Id} \}$. Thus the conclusion follows from the Ping-Pong Lemma \ref{lem: PingPong}. \end{proof}

\begin{lem}\label{lem: x ell y hyp}
Let $G=A*B$ be a free product. Let $x$ be elliptic and $y$ hyperbolic. If $\mbox{Fix}(x)\notin L_y$, then \[
\langle x,y\rangle=\langle x\rangle*\langle y\rangle.\]
\end{lem}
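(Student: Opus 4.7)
The plan is to run a Ping-Pong argument on the Bass-Serre tree $T$ of $G = A*B$, in the spirit of Lemma \ref{lem: xi yi free prod}. Since $C$ is trivial in a free product, the fixed point set of the non-trivial elliptic element $x$ is a single vertex $v_x := \mbox{Fix}(x)$, and by assumption $v_x \notin L_y$. Let $w$ be the vertex of $L_y$ closest to $v_x$ and let $u$ be the neighbor of $v_x$ on the geodesic $[v_x, w]$ (so $u = w$ in the degenerate case $d(v_x,L_y) = 1$). The essential input is that edge stabilizers in the Bass-Serre tree of a free product are trivial, so $x$ acts freely on the edges incident to $v_x$; in particular $x^n(u) \neq u$ for every non-trivial $x^n \in \langle x\rangle$.

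As Ping-Pong data for Lemma \ref{lem: PingPong} applied to the subgroups $G_X = \langle x\rangle$ and $G_Y = \langle y\rangle$, I would take $X$ to be the union of those components of $T \setminus \{v_x\}$ not containing $u$, take $Y$ to be the component of $T \setminus \{u\}$ not containing $v_x$, and take the base point to be the vertex $u$ itself. One checks at once that $X \cap Y = \emptyset$ and that $u \notin X \cup Y$, so the data are admissible.

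The first Ping-Pong inclusion is short: any $z \in Y \cup \{u\}$ has geodesic $[z, v_x]$ terminating in the edge $\{u, v_x\}$, so the image $[x^n(z), v_x]$ terminates in $\{x^n(u), v_x\}$; since $x^n(u) \neq u$, the image $x^n(z)$ lies in a component of $T \setminus \{v_x\}$ other than the one containing $u$, hence in $X$. For the second inclusion, any $z \in X \cup \{u\}$ has $w$ as its unique closest vertex on $L_y$; applying $y^n$ for $n \neq 0$ gives $y^n(w) \neq w$ as the closest vertex of $L_y$ to $y^n(z)$, so the geodesic $[y^n(z), v_x]$ runs through $y^n(w)$, along $L_y$ back to $w$, and then along $[w, v_x]$, necessarily passing through $u$; thus $y^n(z) \in Y$, and $y^n(z) \neq u$ because their closest vertices on $L_y$ differ. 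Invoking Lemma \ref{lem: PingPong} yields $\langle x, y\rangle = \langle x\rangle * \langle y\rangle$.

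The only point of attention is the bookkeeping in the boundary case $u = w$ (where the branch $[w,v_x]$ degenerates to a single edge), but the three verifications above are uniform in $d(v_x, L_y) \geq 1$. The crucial structural fact driving everything is the triviality of edge stabilizers in a free product's Bass-Serre tree; without it, $x^n$ could fix the edge $\{v_x, u\}$ and the first Ping-Pong inclusion would collapse.
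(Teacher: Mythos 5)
Your overall strategy is exactly the paper's: a Ping--Pong argument on the Bass--Serre tree with $X$ on the $\mbox{Fix}(x)$-side of the bridge to $L_y$, $Y$ on the $L_y$-side, and a base point on the bridge, using triviality of edge stabilizers to see that $x^n(u)\neq u$. The first inclusion $x^n(Y\cup\{u\})\subset X$ and the observation that every point of $X\cup\{u\}$ projects to $w$ on $L_y$ are correct in all cases.

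However, there is a concrete problem with your set $Y$ in the case you dismiss as bookkeeping, namely $d(v_x,L_y)=1$, i.e.\ $u=w\in L_y$. Then $T\setminus\{u\}$ has at least three components: one containing $v_x$ and (at least) two more containing the two rays of $L_y\setminus\{w\}$, so ``the component of $T\setminus\{u\}$ not containing $v_x$'' is not well defined, and no single component contains $L_y\setminus\{w\}$. With $Y$ a single component, the inclusion $y^n(X\cup\{u\})\subset Y$ fails for one sign of $n$: for $z\in X\cup\{u\}$ the point $y^n(z)$ lies in the component of $T\setminus\{w\}$ containing $y^n(w)$, and this is a different component for $n>0$ and $n<0$. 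The repair is one line: take $Y$ to be the union of \emph{all} components of $T\setminus\{u\}$ other than the one containing $v_x$ (equivalently, all vertices $\neq u$ whose geodesic to $v_x$ passes through $u$); then $X\cap Y=\emptyset$, $u\notin X\cup Y$, and your three verifications go through verbatim for every $d(v_x,L_y)\geq 1$. This is in effect what the paper does differently: it cuts $T$ along the \emph{open} segment $\gamma$ between $\mbox{Fix}(x)$ and $L_y$ rather than at the vertex $u$, so that $L_y$ remains entirely inside the $Y$-side and both powers $y^{\pm n}$ are absorbed; with your cut at $u$ this fact must be imposed by hand in the boundary case. With that correction your proof is complete and essentially identical in spirit to the paper's.
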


\begin{proof} Let $\gamma$ be  the interior of the geodesic segment between $\mbox{Fix}(x)$ and $L_y$. Let $X$, respectively $Y$, be the union of the connected components $C$ of $T\setminus \gamma$ for which $d(C,\mbox{Fix}(x)) = d(C,\gamma)$, resp. $d(C,L_y)\leq d(C,\gamma)$. Pick $v$ on $\gamma$. Observe that $x^k(Y\cup \{ v\})\subset X$ for any $k$ not a multiple of the order of $x$, and $y^k(X\cup \{ v\})\subset Y$ for any $k\in \mathbb{Z}\setminus \{ 0\}$. Once again, the conclusion follows from the Ping-Pong Lemma \ref{lem: PingPong}.
\end{proof}

\begin{prop}\label{prop: gp gen by hyp and ell} Let $G=A*B$ be a free product. Let $x$ be elliptic and $y$ hyperbolic. Then there exists $\ell \in \mathbb{Z}$ such that
$$\langle x,y\rangle=\langle x\rangle*\langle y x^\ell \rangle.$$
\end{prop}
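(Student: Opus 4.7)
I proceed by case analysis on the position of $v := \mathrm{Fix}(x)$ relative to the axis $L_y$ of $y$. If $v \notin L_y$, take $\ell = 0$: the element $y$ is hyperbolic with $\mathrm{Fix}(x) \notin L_y$, so Lemma \ref{lem: x ell y hyp} gives $\langle x, y \rangle = \langle x \rangle * \langle y \rangle$ directly.

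Otherwise $v \in L_y$. Let $e_+, e_-$ be the two edges of $L_y$ at $v$. The stabilizer $A := \mathrm{Stab}(v)$ acts freely and transitively on edges at $v$, and $\langle x \rangle \le A$ acts freely with orbits indexed by the cosets $A/\langle x \rangle$. In the first sub-case, $e_+$ and $e_-$ lie in the same $\langle x \rangle$-orbit; I pick $\ell$ with $x^\ell(e_+) = e_-$. The incoming edge at $v$ for the path $(yx^\ell)^{-1}(v) \to v \to yx^\ell(v)$ then equals $x^{-\ell}(e_-) = e_+$, which also equals the outgoing edge, so Lemma \ref{lem: criterion hyperbolic} rules out $yx^\ell$ being hyperbolic with $v \in L_{yx^\ell}$. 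Hence either $yx^\ell$ is hyperbolic with $v \notin L_{yx^\ell}$ (so Lemma \ref{lem: x ell y hyp} applies) or $yx^\ell$ is elliptic; in the latter case its unique fixed vertex (unique because edge stabilizers in free products are trivial) is the midpoint of $[v, y(v)]$ on $L_y$---a vertex because $\tau(y)$ is even in the bipartite Bass--Serre tree---distinct from $v$, so Lemma \ref{lem: xi yi free prod} applies. Either way, $\langle x, y \rangle = \langle x, yx^\ell \rangle = \langle x \rangle * \langle yx^\ell \rangle$.

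In the remaining sub-case, $e_+, e_-$ lie in different $\langle x \rangle$-orbits. Take $\ell = 0$ and prove the free product by direct ping-pong. Let $Y$ be the union of the subtrees at $v$ through $e_+$ and $e_-$ (excluding $v$) and $X$ the union through the other edges at $v$. Orbit separation gives $\{x^k(e_+), x^k(e_-)\} \cap \{e_+, e_-\} = \emptyset$ for $x^k \neq 1$, so $x^k(Y) \subset X$; and $y^m(X) \subset Y$ for $m \neq 0$, since $y^m(v) \in L_y \setminus \{v\} \subset Y$ and $y^m$ sends side branches at $v$ to side branches at $y^m(v) \in Y$. The base point of Lemma \ref{lem: PingPong} must avoid the $x$-fixed vertex $v$; I take $m_+$, the vertex of $L_y$ adjacent to $v$ via $e_+$, as base point and replace $Y$ by $Y \setminus \{m_+\}$ to satisfy the hypotheses.

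The main obstacle lies in this last sub-case: neither preceding lemma covers it, and the Ping-Pong Lemma needs a non-standard base-point choice to accommodate the fact that the natural candidate $v = \mathrm{Fix}(x)$ cannot be used.
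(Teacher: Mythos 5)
Your proof is correct, and while it follows the same skeleton as the paper (split on whether $\mathrm{Fix}(x)$ lies on $L_y$; off-axis use Lemma \ref{lem: x ell y hyp}; on-axis either find a good power $\ell$ or run a ping-pong with $\ell=0$), the on-axis case is organized around a genuinely different dichotomy. The paper asks whether some power $x^\ell$ maps the midpoint $m$ of $[v,y(v)]$ to the midpoint $m'$ of $[v,y^{-1}(v)]$; if so, $yx^\ell$ is automatically elliptic with fixed point $m$ and Lemma \ref{lem: xi yi free prod} applies, and if not, it runs a ping-pong with the sets defined by projection to $L_y$ relative to the interval $(m',m)$ and base point $m$. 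You instead ask whether the two edges $e_\pm$ of $L_y$ at $v$ lie in the same $\langle x\rangle$-orbit. Your condition is weaker than the paper's (if $x^\ell(m)=m'$ then $x^\ell(e_+)=e_-$, but not conversely), so in your same-orbit branch you must handle an extra possibility the paper never meets, namely $yx^\ell$ hyperbolic with $v\notin L_{yx^\ell}$ -- which you do correctly via Lemma \ref{lem: x ell y hyp}. In exchange, your different-orbit branch gives a cleaner ping-pong: the sets are unions of subtrees at $v$, the separation $x^k(Y)\subset X$ drops out immediately from the fact that in a free product the stabilizer of $v$ acts freely on the edges at $v$, and the base-point obstruction (that $v$ itself cannot serve) is handled correctly by taking the neighbour $m_+$ and deleting it from $Y$; your verifications that $y^m(X\cup\{m_+\})$ misses $m_+$ (since $\tau(y)\geq 2$) are the right ones. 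One small citation slip: Lemma \ref{lem: criterion hyperbolic} is a sufficiency criterion for hyperbolicity and is not literally what excludes ``$yx^\ell$ hyperbolic with $v\in L_{yx^\ell}$''; what you actually use is the elementary axis property that for a hyperbolic $h$ with $v\in L_h$ the points $h^{-1}(v),v,h(v)$ lie on a geodesic, so the path cannot backtrack at $v$ -- this is contained in the discussion of axes in Section 3, so the argument stands as is.
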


\begin{proof} If $\mbox{Fix}(x)\notin L_y$, then the conclusion follows from Lemma \ref{lem: x ell y hyp} (with $\ell=0$). Suppose that $\mbox{Fix}(x)\cap L_y=\{v_x\}$ and let $m$ be the midpoint between $v_x$ and $y(v_x)$ and $m'$ be the midpoint between $v_x$ and $y^{-1}(v_x)$. (Note that $m$ and $m'$ are vertices of $T$ since the translation length of $y$ is an even integer.)

Suppose that there exists $\ell \in \mathbb{Z}$ such that $x^\ell (m)=m'$. Then $m$ is a fixed point of $yx^\ell$ which is distinct from the fixed point $v_x$ of $x$. The conclusion follows by applying Lemma \ref{lem: xi yi free prod} to the two elliptic elements $x$ and $yx^\ell$.

If $x^\ell(m)\neq m'$ for any $\ell\in \mathbb{Z}$, set
$$X=\{ w\in T^0 \mid d(w,L_y)=d(w,(m',m))\}$$
and
$$Y=\{ w\in T^0 \mid d(w,L_y) = d(w,L_y \setminus (m',m))\}\setminus \{m\},$$
where $(m',m)$ denotes the open interval from $m'$ to $m$.

\medskip
\begin{center}
\includegraphics[width=80mm]{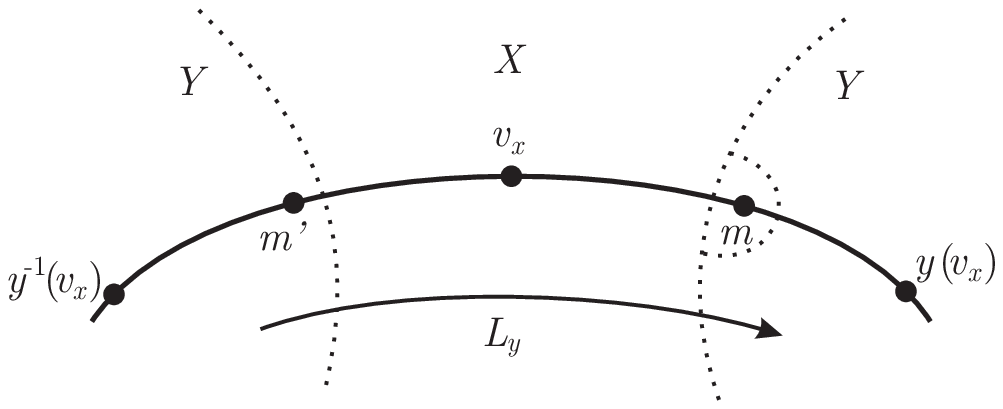}
\end{center}
\medskip

Note that $x^k(Y\cup \{m\} )\subset X$ for any $k$ not a multiple of the order of $x$. Indeed, this is because the only vertices at distance equal to $\tau(y)/2$ from $v_x$ which are not in $X$ are $m$ and $m'$, and as $x^k(m)\neq \{m,m'\}$, the element $x^k$ maps $m$ and $m'$, and consequently also $Y$ into $X$. We also have the obvious inclusion $y^k(X\cup \{m\})\subset Y$ for any $k\neq 0$. Apply the Ping-Pong Lemma \ref{lem: PingPong} to $m,X,Y$ to conclude that
$$\langle x,y\rangle=\langle x\rangle*\langle y \rangle,$$
as desired.
\end{proof}

\section{Proof of Theorem \ref{thm: free product}}

Let $G=A*B$ be a free product and $S$ a finite generating set. We start with preliminary estimates for the growth of $G$ with respect to $S$ in the case when $S$ contains three elliptic elements with distinct fixed points.

\begin{lem} \label{lem: 3 ell} Let $x,y,z\in S$ be elliptic elements with distinct fixed points. If $\langle x,y,z \rangle \neq C_2*C_2$ then $\omega(G,S)\geq\frac{1+\sqrt{5}}{2}$.
\end{lem}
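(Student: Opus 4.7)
The plan is to argue that the subgroup $H = \langle x,y,z\rangle$ contains enough free-product structure to force growth rate at least $(1+\sqrt{5})/2$ with respect to $S$. In the generic configuration I expect to prove the stronger bound $\omega(G,S) \geq 2$, and to fall back on a pair-subgroup analysis in the single degenerate configuration permitted by the hypothesis.

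First I would invoke Lemma \ref{lem: xi yi free prod} pairwise to obtain $\langle i, j\rangle \cong \langle i\rangle * \langle j\rangle$ for each pair in $\{x, y, z\}$. Then I would run a three-subgroup Ping-Pong argument on the tree: let $m$ be the median vertex of $v_x, v_y, v_z$, and consider the three connected components $X, Y, Z$ of $T\setminus\{m\}$ containing $v_x, v_y, v_z$ respectively. Using the key geometric fact from the proof of Lemma \ref{lem: criterion hyperbolic} (that any nontrivial elliptic element fixing $v_x$ sends every other vertex across $v_x$ with $v_x$ as midpoint), one checks that $g(\{m\} \cup Y \cup Z) \subseteq X$ for every nontrivial $g \in \langle x\rangle$, and analogously for $\langle y\rangle, \langle z\rangle$. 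A direct three-subgroup extension of Lemma \ref{lem: PingPong} then yields $H = \langle x\rangle * \langle y\rangle * \langle z\rangle$. In this free product any word $w_1 \cdots w_n$ with $w_i \in \{x, y, z\}$ and $w_{i+1} \neq w_i$ is in normal form and so represents a distinct element of $G$; there are $3 \cdot 2^{n-1}$ such words of $S$-length at most $n$, so $\omega(G,S) \geq 2 > (1+\sqrt{5})/2$.

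The hard part will be the unique geometric configuration in which the three-subgroup Ping-Pong collapses: the collinear situation where the middle fixed vertex belongs to an involution whose tree-action swaps the two sides and maps one outer fixed vertex to the other. Up to relabelling, this means $v_y$ lies between $v_x$ and $v_z$, $|y| = 2$, and $y(v_x) = v_z$; then $y x y^{-1}$ fixes $v_z$ and, under the order-matching forced by the configuration, must equal a generator of $\langle z\rangle$, so $z \in \langle x, y\rangle$ and $H = \langle x,y\rangle \cong C_{|x|} * C_2$ by Lemma \ref{lem: xi yi free prod}. The hypothesis $H \neq C_2 * C_2$ precisely excludes the irreducibly-bad subcase $|x| = 2$, forcing $|x| \geq 3$. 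Switching attention to the pair $\{x, z\}$ (two elliptic elements of common order $|x| \geq 3$ with distinct fixed vertices), Lemma \ref{lem: xi yi free prod} gives $\langle x, z\rangle \cong C_{|x|} * C_{|x|}$, whose growth rate with respect to the standard generators $\{x^{\pm 1}, z^{\pm 1}\} \subset S$ is at least $2$ by a routine growth-series computation (the characteristic equation $1 = \sum_{k=1}^{|x|-1} t^{\min(k,|x|-k)}$ has a root at $t = 1/2$ when $|x| = 3$ and smaller for larger $|x|$), closing the argument with $\omega(G,S) \geq 2$ in this case as well.
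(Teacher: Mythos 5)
Your tripod case is fine: when $v_x,v_y,v_z$ span a genuine tripod with median $m$ distinct from all three, the fixed set of every nontrivial power of $x$ is the single vertex $v_x$, the midpoint argument shows $g(\{m\}\cup Y\cup Z)\subset X$, and the based three-factor Ping-Pong gives $\langle x\rangle *\langle y\rangle *\langle z\rangle$ and growth $\geq 2$ (consistent with the paper's remark that the tripod case even yields $2$). The genuine gap is in the collinear case, and your description of when the Ping-Pong ``collapses'' is wrong. If $v_y$ lies between $v_x$ and $v_z$, the median \emph{is} $v_y$, which is fixed (not displaced) by every power of $y$, so your scheme breaks for \emph{every} collinear configuration, not only the one where the middle element is an involution matching up the two outer vertices. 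And in collinear configurations the free product structure you want can genuinely fail: take $G=C_3*C_2=\langle a\rangle *\langle b\rangle$, $x=a$, $y=b$, $z=aba^{-1}$; the fixed vertices are distinct and collinear (with $v_x$ in the middle), the middle element has order $3$, yet $zx=xy$, so alternating words are not distinct, $\langle x,y,z\rangle\cong C_3*C_2\not\cong C_3*C_2*C_2$, and your count giving $\omega\geq 2$ is unavailable. This is not an exotic case the hypothesis excludes: the lemma must (and in the paper does) produce the golden ratio bound here, which it gets not from the cyclic factors but from the \emph{hyperbolic} elements $xy^{-1}$ and $yz^{-1}$ of $S$-length $2$, a positive free monoid (Lemmas \ref{lem: Intersection axis}, \ref{lem: PingPongMonoid}) and Lemma \ref{lemma: x y xy when x y free monoid} -- note the resulting bound in the collinear case is $(1+\sqrt5)/2$, not $2$.

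The degenerate subcase you do isolate is also mishandled. From $|y|=2$ and $y(v_x)=v_z$ you cannot conclude that $yxy^{-1}$ ``must equal a generator of $\langle z\rangle$'': both $yxy^{-1}$ and $z$ merely lie in the stabilizer of $v_z$, which can be larger than a cyclic group. Concretely, in $G=(C_2\times C_2)*C_2$ with $C_2\times C_2=\langle a_1,a_2\rangle$ and $C_2=\langle b\rangle$, take $x=a_1$, $y=b$, $z=ba_2b^{-1}$: this is exactly your configuration, $z\notin\langle x,y\rangle$, $\langle x,y,z\rangle=(C_2\times C_2)*C_2\neq C_2*C_2$, and moreover $|x|=2$, so your fallback ``the hypothesis forces $|x|\geq 3$'' is false -- and this all-involutions situation is precisely the new case $C_2*(C_2\times C_2)$ the paper emphasizes. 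The paper's route around it is different in kind: when $y$ swaps the two neighbours of $v_y$ one shows $y$ has order $2$, and if all the relevant axes $L_{x^{\pm1}y}$, $L_{yz^{\pm1}}$ coincide, Lemma \ref{lem:x(L)=L} forces $x,y,z$ to preserve a common line $L$, giving an embedding $\langle x,y,z\rangle\hookrightarrow\mathrm{Aut}(L)\cong C_2*C_2$, which contradicts the hypothesis; otherwise two of these axes differ and one again gets a positive free monoid on length-two elements. Without an argument of this type your proof does not cover the collinear configurations, so as it stands the proposal has a real gap.
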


\begin{proof} Let $v_x,v_y,v_z\in T^0$ be the fixed points of $x,y,z$ respectively. We distinguish two cases: Either $v_x,v_y,v_z$ lie on a geodesic segments, or they do not, in which case they lie at the extremity of a tripod.

Tripod case: Let $m\in T^0$ be the midpoint of the tripod. Observe by Lemma \ref{lem: x,y elliptic => xy hyperbolic} that the axis $L_{xy^{-1}}$ of $xy^{-1}$ passes through $v_x, v_y$ and $y(v_x)$, while the axis  $L_{yz^{-1}}$ of $yz^{-1}$ passes through $v_z, v_y$ and $y(v_z)$. It follows that the intersection $L_{xy^{-1}}\cap L_{yz^{-1}}$ is equal to the geodesic segment from $m$ to $y(m)$ and in particular the length of the intersection is equal to $2\cdot d(v_y,m)$.

\medskip
\begin{center}
\includegraphics[width=90mm]{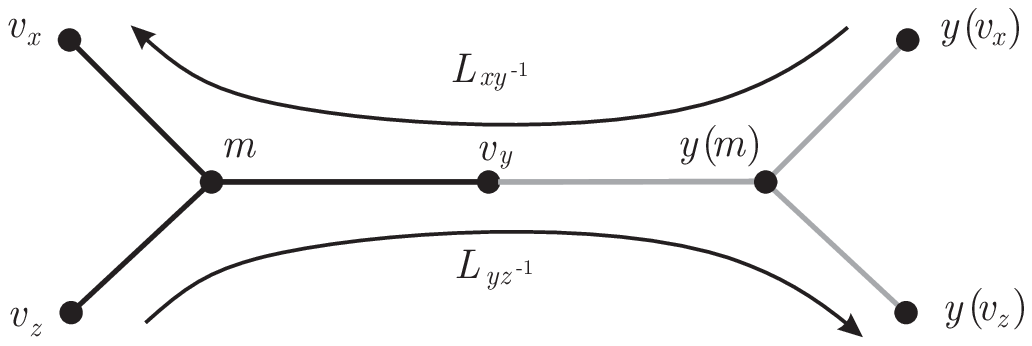}
\end{center}
\medskip

The translation lengths $2\cdot d(v_x,v_y)$,  respectively $2\cdot d(v_y,v_z)$, of $xy^{-1}$ and $yz^{-1}$ being strictly bigger than the length $2\cdot d(v_y,m)$ of the intersection $L_{xy^{-1}}\cap L_{yz^{-1}}$, it follows from Lemma \ref{lem: Intersection axis} that the two hyperbolic elements $xy^{-1}$ and $yz^{-1}$ generate a free subgroup. We conclude by Lemma \ref{lemma: x y xy when x y free monoid} that
$$\omega(G,S)\geq \sqrt{\omega(\langle xy^{-1},yz^{-1}\rangle,\{xy^{-1},yz^{-1},xz^{-1}\} )}\geq \frac{1+\sqrt{5}}{2}.$$
Observe that to obtain the bound of Lemma \ref{lemma: x y xy when x y free monoid} we only need a positive free monoid. As we have here a free group, it is possible to improve the present lower bound to $2$.

Geodesic case: By symmetry, we suppose that $v_y$ belongs to the geodesic segment from $v_x$ to $v_z$. Let $w_x$, respectively $w_z$ be the vertex at distance one from the vertex $v_y$ on the geodesic segment towards $v_x$, respectively $v_z$. If $z(w_x)\neq w_z$ or $z(w_z)\neq w_x$ we can by symmetry suppose that $y(w_z)\neq w_x$. Set
\begin{eqnarray*}
X&=&\{ w\in T^0 \mid d(w,w_x)\leq d(w,v_y) \},\\
Z&=&\{ w\in T^0 \mid d(w,w_z)\leq d(w,v_y) \}.
\end{eqnarray*}
Since the axis of $yz^{-1}$ passes through $v_z,v_y$ and $y(w_z)$ it is obvious that $yz^{-1}(X\cup Z)\subset Z$. Similarly, the axis of $xy^{-1}$ passes through $y(w_x),v_y$ and $v_x$. Since $y(w_x)\neq y(w_z)$, we also have $xy^{-1}(X\cup Z)\subset X$  so that by Lemma \ref{lem: PingPongMonoid}, the two hyperbolic elements $xy^{-1}$ and $yz^{-1}$ generate a free subgroup, and we conclude as in the tripod case that $\omega(G,S)\geq (1+\sqrt{5})/2$.

\medskip
\begin{center}
\includegraphics[width=100mm]{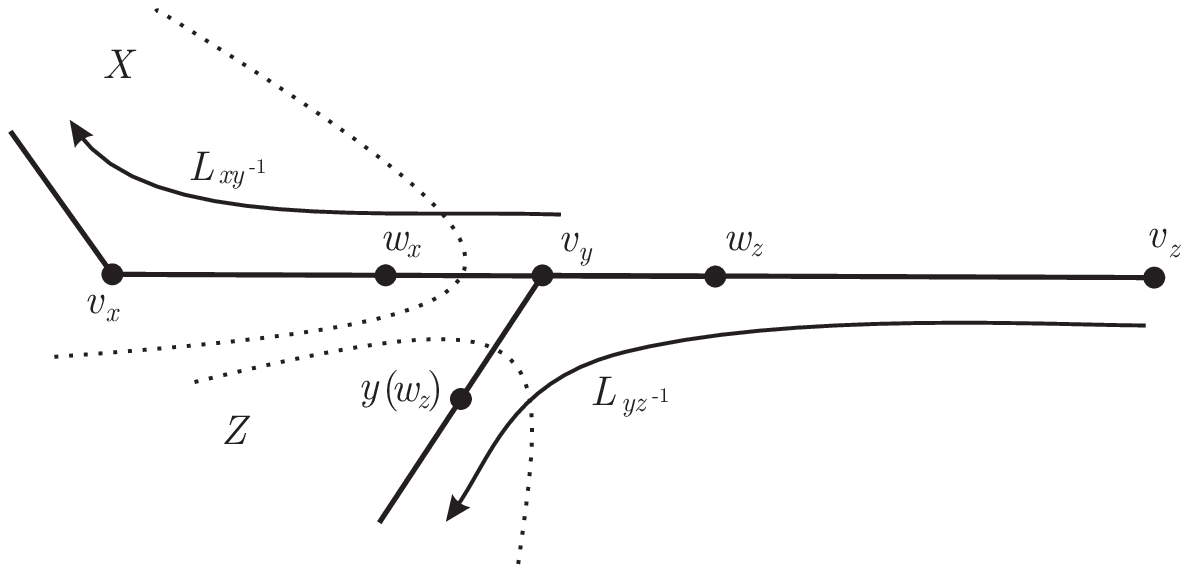}
\end{center}
\medskip

If $y(w_x)=w_z$ and $y(w_z)=w_x$ then $y^2$ fixes the three vertices $w_x,v_y,w_z$ and is hence the identity, so that $y$ has order $2$. If $L_{xy}\neq L_{yz^{-1}}$ we conclude similarly as above that $xy^{-1}$ and $yz^{-1}$ generate a positive monoid giving the desired lower bound for the growth. Note that in this case the intersection of the two axis contains the segment between $w_x$ and $w_z$ and it is important that the translation direction is the same for $xy^{-1}$ and $yz^{-1}$. Thus we can suppose that $L_{xy}=L_{yz^{-1}}$ and upon replacing $x$ and $z$ by their inverses, also that $L:=L_{xy}=L_{x^{-1}y}=L_{yz}=L_{yz^{-1}}$. Lemma \ref{lem:x(L)=L} then implies that $x(L)=y(L)=z(L)=L$. We thus get a homomorphism $\langle x,y,z \rangle \rightarrow \mathrm{Aut}(L)\cong C_2*C_2$. This homomorphism is an injection since a free product action cannot fix more than one vertex without being trivial. It follows that  $\langle x,y,z \rangle\cong C_2*C_2$ as the group generated by $x,y,z$ is a nontrivial free product and can hence neither be the trivial group, nor $C_2$ nor $\mathbb{Z}$.
\end{proof}

\begin{lem} \label{lem: 3 ell dihedral} Let $S$ be a finite generating set for $G=A*B$ consisting only of elliptic elements. Suppose that the set of fixed points $\{ v\in T^0\mid x(v)=v \ \mathrm{for \ some \ } x\in S\}$ has cardinality at least equal to $3$. If $G\neq C_2 *C_2$, then there exists $x,y,z\in S$ with distinct fixed points such that $\langle x,y,z\rangle \neq C_2*C_2$.
\end{lem}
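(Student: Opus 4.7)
The plan is to prove the contrapositive: assume every triple $x,y,z\in S$ with pairwise distinct fixed points satisfies $\langle x,y,z\rangle\cong C_2*C_2$, and deduce $G\cong C_2*C_2$.

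First, every $s\in S$ is an involution. Since the set of fixed vertices of $S$-elements has cardinality at least three, any $s\in S$ can be completed to such a triple, forcing $s$ (as an element of $C_2*C_2$) to have order dividing two.

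Second, all fixed vertices of $S$-elements lie on a single line $L\subset T$. Given representatives $s_i,s_j,s_k\in S$ of three distinct fixed points $v_i,v_j,v_k$, the pair-products $s_is_j$, $s_js_k$, $s_is_k$ are hyperbolic by Lemma \ref{lem: x,y elliptic => xy hyperbolic}, and all hyperbolic elements of a $D_\infty$-action on a tree share a common axis (the translation subgroup of $D_\infty$ is cyclic, and powers of a hyperbolic element share its axis). Lemma \ref{lem: x,y elliptic => xy hyperbolic} places $v_i, v_j$ on $L_{s_is_j}$, etc., so this common axis contains $v_i,v_j,v_k$. Fixing $v_i,v_j$ and varying $v_k$, uniqueness of the line through two distinct vertices in a tree forces every fixed vertex of an $S$-element to lie on a single line $L$. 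Moreover, each $s\in S$ preserves $L$: conjugation by $s$ of the hyperbolic element $sx\in\langle s,x,y\rangle$ (for any suitable triple containing $s$) yields $xs$, still hyperbolic in the same $C_2*C_2$, with axis $s(L)=L$.

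Finally, $G$ acts on $L$ with trivial kernel, since edge stabilizers in the Bass--Serre tree of a free product are trivial. As $G$ preserves the bipartition of $T$ (hence of $L$) and is non edge-reversing, it embeds into the bipartition-preserving non edge-reversing automorphism group of a bipartite line, which is $C_2*C_2$. Since $G=A*B$ is a nontrivial free product and the only subgroup of $D_\infty$ that is a nontrivial free product is $D_\infty$ itself, we conclude $G\cong C_2*C_2$, contradicting the hypothesis. I expect the main difficulty to be the axis-coalescence step underlying the construction of $L$, which hinges on the cyclic structure of the translation subgroup of $D_\infty$ together with the principle that powers of a hyperbolic tree automorphism share its axis.
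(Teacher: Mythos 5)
Your overall route is the same as the paper's: argue the contrapositive, show that the pairwise products of generators with distinct fixed points share a common axis $L$, deduce that every element of $S$ preserves $L$, and embed $G$ into the automorphism group of a line to conclude $G\cong C_2*C_2$. Your justification of the axis-coincidence step is genuinely different and works: where the paper argues that two hyperbolic elements with distinct axes would generate a free monoid, giving exponential growth inside $C_2*C_2$, you observe that in $\langle s_i,s_j,s_k\rangle\cong C_2*C_2$ the products $s_is_j,\ s_js_k,\ s_is_k$ are nontrivial elements of the cyclic translation subgroup, hence nonzero powers of a single element, which must be hyperbolic on $T$ and whose axis they all inherit; this is a clean, more algebraic alternative. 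Likewise your conjugation identity $s(sx)s=xs=(sx)^{-1}$, giving $s(L_{sx})=L_{sx}$, replaces the appeal to Lemma \ref{lem:x(L)=L} and is valid since the generators are involutions. (The order-two claim deserves one extra line, since $C_2*C_2$ has infinite-order elements: an infinite-order elliptic $s$ would make $\langle s\rangle$ a finite-index subgroup fixing a vertex, forcing the whole triple group to fix a point and contradicting hyperbolicity of $sx$; the paper is equally terse here.)

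Two justifications need repair. First, ``uniqueness of the line through two distinct vertices in a tree'' is false: infinitely many bi-infinite geodesics can pass through two given vertices. Fortunately you do not need it, because for every $k$ the common axis of the triple $(s_i,s_j,s_k)$ is $L_{s_is_j}$ itself, independent of $k$, so all fixed vertices lie on $L:=L_{s_is_j}$ directly. Second, and more substantively, in the step ``$s(L)=L$'' you implicitly identify the axis $L_{sx}$ coming from the chosen triple with $L$; this does not follow from the fixed points of that triple lying on $L$, since two distinct lines in a tree can share an arbitrarily long segment. You must choose the ``suitable triple'' to contain two reference elements: if $v_s\notin\{v_i,v_j\}$ take $(s,s_i,s_j)$, whose common axis is $L_{s_is_j}=L$; if $v_s=v_i$ take $(s,s_j,s_k)$ and note $L_{s_js_k}=L_{s_is_j}=L$ via the triple $(s_i,s_j,s_k)$. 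This is exactly the chaining the paper performs (``by symmetry we can suppose that $x,y,z'$ have distinct fixed points''); with it, your argument is complete, and the concluding embedding into the automorphism group of the line, plus the fact that the only subgroup of $D_\infty$ which is a nontrivial free product is $D_\infty$ itself, matches the paper's endgame.
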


\begin{proof} Let $x,y,z\in S$ have distinct fixed points. Note that as $\langle x,y,z \rangle \cong C_2*C_2$ the three elements $x,y,z$ have order $2$. If two among the axis $L_{xy},L_{yz},L_{zx}$ were different, then the group generated by $x,y,z$ would have exponential growth, which is impossible. Thus $L:=L_{xy}=L_{yz}=L_{zx}$ and by Lemma \ref{lem:x(L)=L} we have $x(L)=y(L)=z(L)=L$. Now let $z'\in S$ be another elliptic element. By symmetry we can suppose that $x,y,z'$ have distinct fixed points. As previously, we conclude that $L=L_{xy}=L_{yz'}$ and $z'(L)=L$. Thus, the whole group $G$ preserves the infinite geodesic $L$ and we have a surjective map $G\rightarrow C_2*C_2$ which is also an injection since an element of the free product fixes a whole line (and in particular two vertices of its tree) only if it is the identity. This contradicts our assumption that $G\neq C_2*C_2$.
\end{proof}

\begin{proof}[Proof of Theorem \ref{thm: free product}] Let $S$ be a generating set for the free product $G=A*B$. 


If there exist two hyperbolic elements $y,y'\in S$ with different axis $L_y\neq L_{y'}$, then the subgroup they generate contains a positive free monoid and the growth rate $\omega(G,S)$ is greater or equal to $2$. So we suppose that all hyperbolic elements in $S$ have the same axis $L_y$. Since $G$ does not preserve $L_y$, there must exist an elliptic element $x\in S$ with $x(L_y)\neq L_y$. By Proposition \ref{prop: gp gen by hyp and ell} the group generated by $x$ and $y$ is isomorphic to a free product $\langle x \rangle * \langle yx^\ell \rangle$ for some $\ell \in \mathbb{N}$ strictly smaller than the order of $x$. If $\ell=0$ the growth rate is greater or equal to $2$. If $\ell\neq 0$, the growth rate is, by Lemma \ref{lem: free prod of cyclic, non canonical basis} greater or equal to the golden ratio, unless $\langle x,y\rangle =\langle x \rangle * \langle yx \rangle=C_2*C_2$. But in the latter case, both $yx$ and $x$ have order $2$, so the hypothesis of Lemma \ref{lem:x(L)=L} for the two hyperbolic elements $yx$, $y$ with disjoint fixed points is trivially satisfied, so that $x(L_{(yx)x})=x(L_y)=L_y$, contradicting our hypothesis.




We can from now on suppose that $S$ does not contain
any hyperbolic elements, or in other words, that all $x\in S$ are
elliptic. Suppose that the set of fixed points $\{ v\in T^0\mid x(v)=v \ \mathrm{for \ some \ } x\in S\}$ has cardinality at least equal to $3$. Since $G\neq C_2*C_2$, then there exist $x,y,z\in S$ with distinct fixed points such that $\langle x,y,z\rangle \neq C_2*C_2$ by Lemma \ref{lem: 3 ell dihedral}. Hence $\omega(G,S)\geq (1+\sqrt{5})/2$ by Lemmas \ref{lem: 3 ell}. We can thus restrict to the case where there are two possible fixed point sets: $\mbox{Fix}(x_1)=\dots=\mbox{Fix}(x_n)\neq \mbox{Fix}(y_1)=\dots=\mbox{Fix}(y_m)$ for $S=\{x_1,\dots,x_n,y_1,\dots,y_m\}$. (Note that the case of a single fixed point set is excluded by the fact that $G$ has no global fixed point.) By Lemma \ref{lem: xi yi free prod}, $G$ is isomorphic to the free product
$$G=\langle x_1,\dots x_n\rangle * \langle y_1,\dots y_m \rangle.$$

If the sets $\{x_1^{\pm 1},\dots x_n^{\pm 1}\}$ and $\{y_1^{\pm 1},\dots y_m^{\pm 1}\}$ both contain at least two elements $x,x'$ and $y,y'$ respectively, then counting the distinct words obtained by alternating one of $x,x'$ with one of $y,y'$ already shows $\omega(G,S)\geq 2$.

We can thus suppose that $n=1$ and $x_1=x_1^{-1}=:x$. If $\{y_1^{\pm 1},\dots y_m^{\pm 1}\}$ contains three elements $y,y',y''$ then considering the distinct words obtained by alternating $x$ with one of $y,y',y''$ gives $\omega(G,S)\geq \sqrt{3}$. We can thus restrict to the case when $\{y_1^{\pm 1},\dots y_m^{\pm 1}\}$ has two elements, which means that either $m=2$ with $y_1^2=y_2^2=1$ or $m=1$ with $y_1\neq y_1^{-1}$. (The case when the set contains exactly one element, so that $m=1$ with $y_1^2=1$ is excluded by the fact that $G$ is not the infinite dihedral group.)

Suppose that $m=2$ and $y_1^2=y_2^2=1$. If $y_1y_2\neq y_2 y_1$ then we count the distinct words $W(n)$ of length $n$ in the alphabet $\{x,y_1,y_2\}$ obtained by alternating $x$ with one of $y_1,y_2,y_1y_2$ or $y_2y_1$. This leads to the recurrent relation
$$W(n) = 2W(n-2)+2W(n-3).$$
The growth rate of $G$ is then bigger than the unique positive root of $x^3-2x-2$, which is strictly greater than the golden ratio. If $y_1y_2=y_2y_1$ then $\langle y_1,y_2\rangle=C_2\times C_2$ and the growth rate is equal to the golden ratio. Indeed, counting the distinct words obtained by alternating $x$ with one of $y_1,y_2,y_1y_2$ leads to the recurrent relation
$$W(n)=2W(n-2)+W(n-3)$$
and the corresponding polynomial
$$x^3-2x-1=(x+1)(x^2-x-1)$$
that has a single positive root $(1+\sqrt{5})/2$.

Finally, suppose that $m=1$. Set $y:=y_1$ and let $k\geq 4$ be the order of $y$. (If $k=2$ or $3$ then $G\cong C_2*C_2$ or $C_2*C_3$.) If $k\geq 5$ then we count the words $W(n)$ of length $n$ obtained by alternating $x$ with one of $y,y^{-1},y^2,y^{-2}$ which gives the same recurrent relation $W(n)=2W(n-2)+2W(n-3)$ as above and growth strictly greater than golden ratio. If $k=4$, we alternate between $x$ and one of $y,y^{-1},y^2$ giving the relation $W(n)=2W(n-2)+W(n-3)$ and growth rate equal to the golden ratio.

\end{proof}

\section{Proof of Theorem \ref{thm: amalgamated product}}

Let $S$ be a generating set for the amalgamated product $G=A*_{C}B$.
The assumption $([A:C]-1)([B:C]-1)\geq2$ tells us that the corresponding tree is not a point nor a line, and in particular $G$ does not fix a point or preserve a line.

Suppose that $S$ contains a hyperbolic element $y$. Since $G$ cannot preserve the axis $L_y$, there must exist $x\in S$ with $x(L_y)\neq L_y$. It follows that the group generated by the two hyperbolic elements $y$ and $xyx^{-1}$ contains a free positive monoid, so that, by Lemma \ref{lem: lengthMonoid}, the growth rate $\Omega(G,S)$ is bigger or equal to the unique positive real root of
$$f(z)=z^3-z^2-1,$$
which is strictly greater than $\sqrt{2}$ since $f(\sqrt{2})<0$ while $\lim_{z\rightarrow +\infty}f(z)=+\infty$. Note that in this case an argument of Avinoam Mann \cite[Proposition 8]{Mann} would allow to improve this lower bound to $(1+\sqrt{5})/2$, based on the fact that $(xyx^{-1})^k$ does not have length $3k$ as counted in our estimate but length $k+2$.

We can from now on suppose that $S$ does not contain any hyperbolic
elements, or equivalently, that all $x\in S$ are elliptic. Since $G$ does not fix a vertex, the intersection $\cap_{x\in S}\mbox{Fix}(x)$
of all the fixed point sets is empty. It follows that there exist $x,y\in S$ such that $\mbox{Fix}(x)\cap\mbox{Fix}(y)=\emptyset$. Note that by Lemma \ref{lem: x,y elliptic => xy hyperbolic}, the product $xy$ is hyperbolic.

Suppose that there exists $x',y'\in S\cup S^{-1}$ such that $\mbox{Fix}(x')\cap\mbox{Fix}(y')=\emptyset$ and $L_{x'y'}\neq L_{xy}$. Then, by Lemma \ref{lem: Intersection axis}, the subgroup $\langle xy, x'y' \rangle$ of $G$ contains a free positive monoid and\[
\Omega(G,S)\geq\Omega(\langle x,y,x',y'\rangle,\{x,y,x',y'\})\geq\sqrt{\Omega(\langle xy,x'y'\rangle,\{xy,x'y'\}}\geq\sqrt{2}.\]
We can thus from now on suppose that for every  $x',y'\in S\cup S^{-1}$ such that $\mbox{Fix}(x')\cap\mbox{Fix}(y')=\emptyset$ the axis of $x'y'$ is equal to $L_{xy}$. This applies in particular to the inverses of $x'$ and $y'$,  and in particular implies that
$$L_{x'y'}=L_{x'^{-1}y'}=L_{x'y'^{-1}}=L_{x'^{-1}y'^{-1}}=L_{xy},$$
from which we deduce $x'(L_{xy})=L_{xy}$ by Lemma \ref{lem:x(L)=L}.

Since $G$ does not preserve a geodesic, there must exist $z\in S$ such that $z(L_{xy})\neq L_{xy}$. Thus, the intersections of the fixed point sets $\mbox{Fix}(x)\cap\mbox{Fix}(z)$ and $\mbox{Fix}(y)\cap\mbox{Fix}(z)$ cannot be empty. Since $\mbox{Fix}(z)$ is connected, $z$ must fix the segment between $\mbox{Fix}(x)\cap L_{xy}$ and $\mbox{Fix}(y)\cap L_{xy}$. In particular, $z$ fixes an edge $e$ of $L_{xy}$.

We claim that $\langle xy, zxy \rangle $ contains a free semigroup. Indeed, there is a nonempty proper subsegment (possibly infinite on one side) of $L_{xy}$ fixed by $z$. Let $v$ be a vertex at one of the extremity of the fixed segment. Upon replacing $xy$ by $y^{-1}x^{-1}$ we can suppose that $xy$ moves $v$ away from $\mbox{Fix}(z)\cap L_{xy}$.

Observe that the points $v_1=(zxy)^{-1}(v)=(xy)^{-1}(v)$, $w_1=v_2=v$, $w_2=zxy(v)$ lie on a geodesic and $d(v_1,w_1)=d(v_2,w_2)=\tau(xy)$. Thus, by Lemma \ref{lem: criterion hyperbolic}, the element $zxy$ is hyperbolic and its axis is different from $L_{xy}$ since $zxy(v)$ lies on $L_{zxy}$ but not on $L_{xy}$.
\medskip
\begin{center}
\includegraphics[width=65mm]{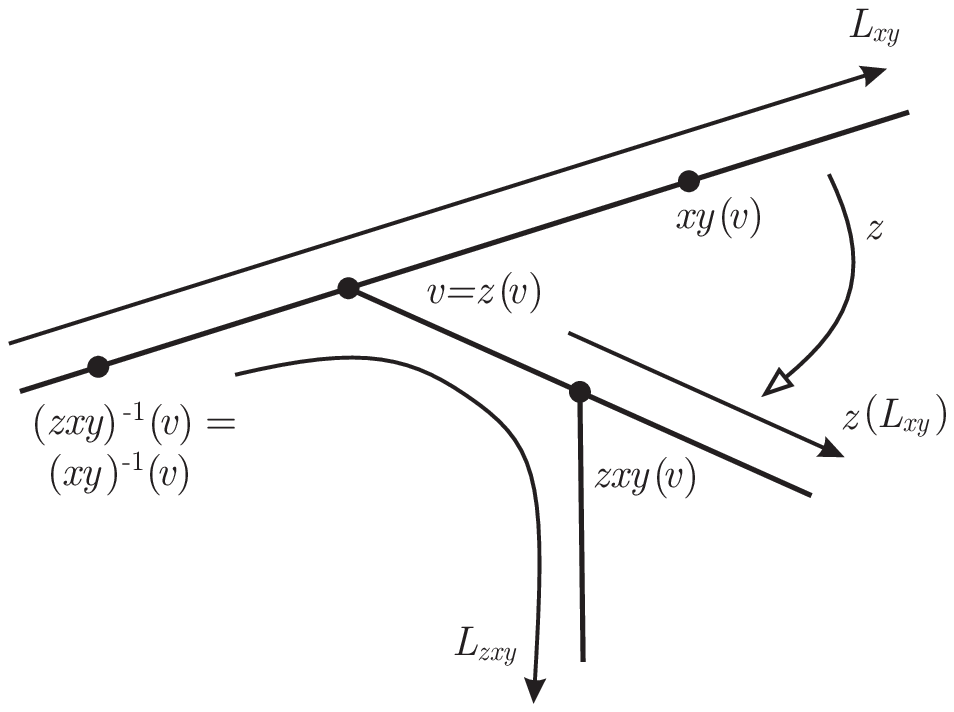}
\end{center}
\medskip

It follows that the group generated by $xy$ and $zxy$ contains a positive free monoid and consequently, by Lemma \ref{lem: lengthMonoid}, that the growth rate of $G$ is greater or equal to the unique positive root of
$$z^3-z-1,$$
which finishes the proof of the theorem. \qed


\section{Examples}

Let $\alpha$ be the unique positive root of the polynomial $z^3-z-1$. Theorem \ref{thm: amalgamated product} has shown that the minimal growth rate of any amalgamated product $A*_C B$ is bigger or equal to $\alpha$, provided that $[A:C]\geqslant 2,  [B:C]\geqslant 3$. We will prove that this lower bound is sharp for the group $\mathrm{PGL}(2,\mathbb{Z})$ that has the well-known decomposition $(C_2\times C_2)*_{C_2} D_6$. Consequently we will construct an infinite class of amalgamated free products having this growth rate.

\smallskip


First we consider the groups $C_2\times C_2 = \langle a,b \mid a^2=b^2=1, [a,b]=1\rangle$ and $D_6=\langle c,d \mid c^2=d^2=1, cdc=dcd \rangle$ and amalgamate them over the subgroups $\langle a\rangle$ and $\langle d\rangle$, both isomorphic to $C_2$. Then a presentation for $G=(C_2\times C_2)*_{C_2} D_6$ is
$$
G=\langle a,b,c,d \mid a^2=1,b^2=1, [a,b]=1, c^2=1, d^2=1, cdc=dcd, a=d \rangle.
$$
Removing the generator $d$, we obtain the presentation
\begin{equation*}
G=\langle a,b,c \mid a^2=1,b^2=1, [a,b]=1, c^2=1, cac=aca \rangle.
\end{equation*}
As $a$,$b$,$c$ are involutions, we may rewrite this presentation as in Proposition \ref{prop: example amalg}, and we get a well-known presentation for the group $\mathrm{PGL}(2,\mathbb{Z})$ (see \cite[formula 7.24]{Coxeter-Moser}).

\begin{prop} \label{prop: example amalg} The exponential growth rate of the group
\begin{equation}
G=\langle a,b,c \mid a^2=1,b^2=1, c^2=1, (ab)^2=1, (ac)^3=1 \rangle\cong \mathrm{PGL}(2,\mathbb{Z}),
\label{PGL-presentation}
\end{equation}
with respect to $S=\{a,b,c\}$ is equal to
$$\omega(G,S)=\alpha,$$
where $\alpha$ is the unique positive root of the polynomial $z^3-z-1$.
\end{prop}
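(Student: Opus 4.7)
My proof plan follows the paper's general pattern of matching the amalgamated-product lower bound from Theorem~\ref{thm: amalgamated product} with an upper bound derived from a combinatorial normal form. Since $G=(C_2\times C_2)*_{C_2} D_6$ satisfies $([A:C]-1)([B:C]-1)=1\cdot 2 = 2$, the lower bound $\omega(G,S)\geq \alpha$ is immediate from Theorem~\ref{thm: amalgamated product}. The substance of the proposition is the matching upper bound $\omega(G,S)\leq \alpha$, which I plan to establish by counting a unique minimum-length normal form.

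I would choose right transversals $T_A=\{1,b\}$ of $C=\{1,a\}$ in $A=\{1,a,b,ab\}$ and $T_B=\{1,c,ca\}$ of $C$ in $B=\{1,a,c,ac,ca,aca\}$. By the normal form theorem for amalgamated products, every $g\in G$ has a unique expression $g=c_0\cdot t_1\cdot t_2\cdots t_k$ with $c_0\in\{1,a\}$ and $t_i$ alternating between $T_A\setminus\{1\}=\{b\}$ and $T_B\setminus\{1\}=\{c,ca\}$. Define $L(g)=|c_0|_S+\sum_i |t_i|_S$, where the relevant $S$-lengths are $|1|_S=0$, $|a|_S=|b|_S=|c|_S=1$, $|ca|_S=2$.

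The central claim is $\ell_S(g)=L(g)$ for every $g\in G$. The inequality $\ell_S(g)\leq L(g)$ holds by reading the normal form as a word in $S^*$. For the converse I would induct on $\ell_S(g)$ using the sublemma $L(sg)\leq L(g)+1$ for every $s\in S$ and $g\in G$. This reduces to a finite case analysis indexed by pairs $(s,(c_0,\mathrm{type}(t_1)))$: in each case one computes the normal form of $sg$ from that of $g$ and checks the bound. The only nontrivial instances are when $s=c$ meets an existing $C$-prefix $c_0=a$ and the first $B$-factor is $c$ or $ca$, where the relations $ca\cdot c=cac=aca=a\cdot ca$ and $ca\cdot ca=caca=ac=a\cdot c$ (both consequences of $(ac)^3=1$) force a renormalization that absorbs or emits an $a$ in the $C$-prefix.

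With $\ell_S=L$ in hand, counting reduces to a generating function computation. Setting $A(z)=z$, $B(z)=z+z^2$ and $C(z)=1+z$ for the weighted enumeration of $T_A\setminus\{1\}$, $T_B\setminus\{1\}$ and $C$, the standard alternating-product formula yields
\[
\sum_{g\in G}z^{\ell_S(g)}=C(z)\cdot\frac{(1+A(z))(1+B(z))}{1-A(z)B(z)}=\frac{(1+z)^2(1+z+z^2)}{1-z^2-z^3}.
\]
The denominator $1-z^2-z^3$ has smallest positive root $1/\alpha$, since $\alpha^3=\alpha+1$ is equivalent to $\alpha^{-3}+\alpha^{-2}=1$. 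Thus the radius of convergence of this generating function is exactly $1/\alpha$, the coefficients grow at exponential rate $\alpha$, and combining with the lower bound gives $\omega(G,S)=\alpha$. The main obstacle is the case analysis underlying the sublemma $L(sg)\leq L(g)+1$: conceptually simple, but requires carefully enumerating how multiplication by each generator affects both ends of the normal form and invoking the braid relation $aca=cac$ at the right moments.
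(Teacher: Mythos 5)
Your proposal is correct and follows essentially the same route as the paper: the lower bound is quoted from Theorem \ref{thm: amalgamated product}, and the upper bound comes from a unique minimal-length normal form arising from the amalgam decomposition (your $c_0t_1\cdots t_k$ with representatives $b$ and $c,ca$ is just a re-parsing of the paper's form \eqref{plastic-form}), counted via a rational generating function with denominator $1-z^2-z^3$. The only difference is bookkeeping: you establish $\ell_S=L$ by induction using $L(sg)\le L(g)+1$ (with the braid-relation cases $ca\cdot c=a\cdot ca$, $ca\cdot ca=a\cdot c$ correctly identified as the delicate ones), whereas the paper rewrites geodesic words into the special form and invokes uniqueness of amalgam normal forms, and it counts with coupled recurrences rather than a closed product formula.
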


Before the proof of the proposition, we start by showing how to write each element $x\in G$ in a unique special form of minimal length.

\begin{lem}
For any element $x\in G$ there exist a unique positive word $W(a,b,c)$ of length equal to $\ell_{G,S}(x)$ and of the form $U(a,b)$ or
\begin{equation}
\begin{array}{ccccccccccccc}
            & &  \nearrow    & b  & \searrow    &   & \nearrow    & b  & \searrow &    &  &      & \\
  U(a,b)  & c &              &    &             & c &             &    &          & c &\cdots& c & V(a,b), \\
            & &  \searrow    & ab & \nearrow    &   & \searrow    & ab & \nearrow &    &  &      &
            \label{plastic-form}
\end{array}
\end{equation}
where $U(a,b)$ and $V(a,b)$ are either one of the words $a,b,ab$ or the empty word.
\label{lem: normal form}
\end{lem}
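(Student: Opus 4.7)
The proof will leverage the amalgamated product decomposition $G \cong A *_{\langle a\rangle} B$ with $A = \langle a, b \rangle \cong C_2 \times C_2$ and $B = \langle a, c \rangle \cong D_6$, splitting into an existence and a uniqueness argument.

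For existence, I would begin with an arbitrary minimal-length positive word $w$ representing $x$ (positive since $a,b,c$ are involutions) and rewrite $w$ into the prescribed shape using only the length-preserving moves $ab \leftrightarrow ba$ in $A$ and $cac \leftrightarrow aca$ in $B$. Since $w$ realises the length $\ell_{G,S}(x)$, no application of $a^2 = b^2 = c^2 = 1$ or $(ab)^2 = 1$ can shorten it; in particular, between any two consecutive $c$'s of $w$ the literal subword in $\{a,b\}$ is nonempty (else $c^2$ would cancel) and equals one of $a,b,ab,ba$. Whenever an inner block equals $a$, I would apply $cac \to aca$; this preserves the total length, strictly decreases the number of $c$'s in $w$, and pushes one freshly exposed $a$ into each neighbouring block, where the commutation $ab = ba$ absorbs it. Finitely many iterations eliminate every inner block equal to $a$, and a final round of $ba \to ab$ commutations canonicalises each remaining inner block to $b$ or $ab$, leaving initial and final blocks $U,V \in \{1,a,b,ab\}$ of the prescribed shape.

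For uniqueness, I would appeal to the normal form theorem for amalgamated products. Fix transversals $T_A = \{1, b\}$ for $A/\langle a\rangle$ and $T_B = \{1, c, ca\}$ for $B/\langle a\rangle$; then every element of $G$ is uniquely expressible as $a^\varepsilon t_1 t_2 \cdots t_n$ with $\varepsilon \in \{0,1\}$ and $t_i \in \{b\} \cup \{c, ca\}$ alternating between the two factors. A direct bijection between the shapes in the lemma and these amalgamated normal forms can then be set up: each occurrence of $X_i = ab$ splits as $a \cdot b$, with the $a$ absorbed into the preceding $c$ to form the $B$-syllable $ca$ and $b$ left as the $A$-syllable; the boundary pieces $U$ and $V$ similarly encode the leading $a^\varepsilon$ and the trailing $A$-syllable in an obvious way. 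Bijectivity then transports the uniqueness of amalgamated normal forms to the uniqueness of the shape.

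The main obstacle is the existence step: one must verify that each $cac \to aca$ rewrite does not inadvertently expose a further cancellable subword (such as $c^2$ arising from an emptied inner block, an $a^2$ arising from colliding $a$'s, or an $abab$ relator pattern), which would contradict the minimality of $w$. Any such accidental shortening would yield a strictly shorter word representing $x$, contradicting $|w| = \ell_{G,S}(x)$; hence the rewriting proceeds without length change. The monovariant given by the total number of $c$'s, which strictly decreases with each $cac \to aca$ application, then ensures termination of the procedure.
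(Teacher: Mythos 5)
Your proposal is correct and essentially reproduces the paper's argument: existence is obtained by the same length-preserving rewriting ($ab\leftrightarrow ba$, $cac\leftrightarrow aca$) of a minimal positive word, which the paper packages as choosing, among minimal-length representatives of the block form, one with the fewest $c$-letters (your $c$-count monovariant), and uniqueness in both cases rests on the normal form theory of the amalgamated product $(C_2\times C_2)*_{C_2}D_6$. The only cosmetic difference is in the uniqueness step: the paper shows $W_1W_2^{-1}\neq 1$ by massaging it into a reduced form and invoking Lyndon--Schupp, whereas you set up an explicit bijection with coset-representative normal forms via the transversals $\{1,b\}$ and $\{1,c,ca\}$ --- two standard implementations of the same theorem.
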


\begin{proof} Let $X_0$ be any word representing the element $x$ of length equal to $\ell_{G,S}(x)$. As $a,b,c$ are involutions,
the word $X_0$ can be rewritten as a strictly positive word $X_1$. Moreover, as $a$ and $b$ commute,
we may rewrite $X_1$ to a word $X_2$ which has only occurrences of $ab$, but none of $ba$.
As $a^2=b^2=c^2=1$, there is no similar adjacent letters in $X_2$. If there are no $c$-letters in the word $X_2$, it is either empty or is one of the words $a,b,ab$. Otherwise, the word $X_2$ has to be written in the form
\[
\begin{array}{ccccccccccccc}
            & &  \nearrow    & b  & \searrow    &   & \nearrow    & b  & \searrow &  &   &      & \\
  U(a,b)  & c &  \rightarrow & a  & \rightarrow & c & \rightarrow & a  & \rightarrow & c &\cdots& c & V(a,b), \\
            & &  \searrow    & ab & \nearrow    &   & \searrow    & ab & \nearrow &  &   &      &
\end{array}
\label{abc-normalform}
\]
where the words $U(a,b)$ and $V(a,b)$ are as in the statement of the lemma.

Among all the words of this form representing $x$ and of length $\ell_{G,S}(x)$, there is at least one word $X_3$ which has the smallest possible number of letters $c$. Such a word does not contain a subword $cac$, as this subword could be rewritten as $aca$ giving a smaller number of $c$-letters in $X_3$. From this we immediately obtain that $X_3$ has the form \eqref{plastic-form}.

To prove the uniqueness we take two different words $W_1,W_2$ of the form \eqref{plastic-form} and consider the word $W_1W_2^{-1}$. We will use a standard argument about normal forms of amalgamated free products (see \cite[Theorem IV.2.6]{Lyndon-Schupp}) to show that $W_1W_2^{-1}\ne 1$. We may suppose that $U_1(a,b)\ne U_2(a,b)$ and $V_1(a,b)\ne V_2(a,b)$ as otherwise we can reduce $W_1$ and $W_2$ and proceed by induction on the length. So now we have
$$R=W_1W_2^{-1}=U_1(a,b) \ldots c \cdot V_1(a,b) V_2^{-1}(a,b) \cdot c \ldots U_2^{-1}(a,b).$$
Conjugating by $b$ if necessary we make $R$ start and end with either $a$ or $ab$ syllables. If $c\cdot V_1 V_2^{-1} \cdot =cac$ then we replace this occurrence by $aca$ and see that $R$ becomes a non-empty word of the form \eqref{plastic-form} which consists of syllables $b,ab$ separated by some letters $c$, so it is a reduced normal form, hence $R \ne 1$, and the uniqueness is proved. If one of the words $W_1$ or $W_2$ has form $U(a,b)$, the proof is identical.
\end{proof}

\begin{proof}[Proof of Proposition \ref{prop: example amalg}]
The lower bound $\Omega(G)\geq \alpha$ follows from Theorem \ref{thm: amalgamated product}. To get the upper bound, we will estimate $\omega(G,S)$ for the generating set $S=\{a,b,c\}$ by counting the number of words having form \eqref{plastic-form}. For this, we note the following recurrence relations between the number $W(n)$ of all words of length $n$ of the form \eqref{plastic-form} and $C(n)$ the number of those words that end with a $c$-letter:
\begin{equation*}
\begin{array}{rcl}
W(n) &=&C(n)+C(n-1)+C(n-2),\\
C(n) &=& C(n-2)+C(n-3).
\end{array}
\end{equation*}
We now easily see that the second equation of this system has $\alpha$ as a growth exponent for the recurrent sequence,
hence the first one (as a linear combination of the second) has the same exponent. Since both equations are linear recurrences,
the function $W(n)$ has a rational generating function $P(z)/Q(z)$. The growth function $f_{G,S}(n)=W(0)+W(1)+\ldots+W(n)$ has
generating function $P(z)/((1-z)Q(z))$ that obviously has the same radius of convergence $1/\alpha$, hence $\Omega(G)\leq\omega(G,S)=\alpha$, so the statement is proved.
\end{proof}

Note that since the given presentation of $\mathrm{PGL}(2,\mathbb{Z})$ with respect to the set $S=\{a,b,c\}$ is a presentation of a Coxeter group, its growth function and hence its exponential growth rate $\omega(G,S)$ can easily be deduced from Steinberg's recursive formula for growth functions of Coxeter groups \cite[Corollary 1.29]{Steinberg}.

\medskip

We may go a little further and prove that for the group $\tilde{G}=\mathrm{GL}(2,\mathbb{Z})=\mathrm{Aut}(\mathbb{F}_2)$ we also have  $\Omega(\tilde{G})=\alpha$. Indeed, $\tilde{G}$ is a central extension of $G$ by a cyclic group, and the group $G=\mathrm{PGL}(2,\mathbb{Z})$ has a linear Dehn function since it is an amalgamated free product of finite groups. Then we can use \cite[Lemma 12]{Talambutsa2011} to conclude that $\Omega(\tilde{G})=\Omega(G)=\alpha$.

Using the following classical presentation \cite[formula 7.21]{Coxeter-Moser}:
\begin{equation}
\tilde{G} = \langle a,b,c, z \mid (ab)^2=(bc)^3=z, z^2=1, a^2=b^2=c^2=1 \rangle.
\label{GL-presentation}
\end{equation}
one can show that $\mathrm{GL(2,\mathbb{Z})}$ can be decomposed as $D_8 *_{(C_2\times C_2)} (D_{12})$. Indeed, introducing a new generator $d=b$ and substituting $z$ by $(ab)^2$ we get
$$
\tilde{G}=\langle a,b,c,d \mid d=b, (ab)^4=1, (ab)^2=(cd)^3, a^2=b^2=c^2=d^2=1 \rangle.
$$
This presentation can be obtained taking a formal amalgamation of the groups $D_8=\langle a,b \mid a^2=b^2=1, (ab)^4=1$ and $D_{12}=\langle c,d \mid c^2=d^2=1, (cd)^{6}=1\rangle$ with isomorphism sending central element $(ab)^2$ to $(cd)^3$ and $b$ to $d$. Both amalgamated subgroups $\langle (ab)^2, b \rangle$ and $\langle (cd)^3, d \rangle$ are isomorphic to $C_2 \times C_2$.

\smallskip

If we modify the presentation \eqref{GL-presentation}, removing the relation $z^2=1$ we will get another central extension $\overline{G}$ of the group $\mathrm{PGL}(2,\mathbb{Z})$ by the subgroup $\langle z \rangle \cong \mathbb{Z}$. So the same argument as above shows that $\Omega(\overline{G})=\alpha$. The group $\overline{G}$ can be obtained as an amalgamated product of two infinite dihedral groups $\langle a,b \mid a^2=b^2=1\rangle$ and $\langle c^2=d^2=1 \rangle $ by the isomorphism $(ab)^2 \to (cd)^3$ and $b\to d$. The group $\overline{G}$ can be considered as the analogue of the trefoil knot group $\langle a,b \mid a^2=b^3\rangle$ that is the universal central extension for the groups $\mathrm{PSL}(2,\mathbb{Z})$ and $\mathrm{SL}(2,\mathbb{Z})$.

\medskip

Finally, notice that if for some number $\beta$ there is an example of amalgamated free product $G=A *_C B$ such that $\Omega(G)=\beta$, then there are infinitely many such examples. Such series can be constructed using two simple observations. First, a direct product $P\times (A *_C B)$ can also be presented as $(A\times P) *_{C\times P} (B\times P)$. Second, we have $\Omega(P\times G)=\max(\Omega(P),\Omega(G))$ (see \cite[Proposition 5]{Mann}). Thus, taking $G_n=\mathbb{Z}^n \times G$ we will get a series of groups which are not isomorphic because they have different abelianizations and such that $\Omega(G_n)=\Omega(G)=\beta$.

\bibliographystyle{amsalpha}

\end{document}